\numberwithin{equation}{section}
\numberwithin{figure}{section}
\numberwithin{table}{section}
\def\cC{\mathcal{C}}
\def\bR{{\mathbb R}}
\def\sE{{\mathscr E}}
\def\sF{{\mathscr F}}
\def\sA{{\mathscr A}}
\def\cF{\sF}
\def\sL{\mathscr{L}}
\def\cD{\mathcal{D}}
\def\bs{\mathbf{s}}
\def\br{\mathbf{r}}
\def\bN{\mathbb{N}}
\def\fm{\mathfrak{m}}
\def\${|\!|\!|}
\def\l|{\left|\!\left|\!\left|}
\def\r|{\right|\!\right|\!\right|}
\newtheorem{theorem}{Theorem}[section]
\newtheorem{lemma}[theorem]{Lemma}
\newtheorem{corollary}[theorem]{Corollary}
\theoremstyle{definition}
\newtheorem{definition}[theorem]{Definition}
\newtheorem{example}[theorem]{Example}
\theoremstyle{remark}
\newtheorem{remark}[theorem]{Remark}
\numberwithin{equation}{section}
\begin{document}
\markboth{Resolvent approach to diffusions with discontinuous scale}{Resolvent approach to diffusions with discontinuous scale}

\title[Resolvent approach to diffusions with discontinuous scale]{Resolvent approach to diffusions with discontinuous scale}

\author{Liping Li}
\address{Fudan University, Shanghai, China.  }
\email{liliping@fudan.edu.cn}
\thanks{The first named author is a member of LMNS,  Fudan University.  He is partially supported by NSFC (No.12371144).}

\author{Ying Li$^*$}
\address{School of Mathematics and Computational Science, Xiangtan University, Xiangtan, China.}
\email{liying@xtu.edu.cn}
\thanks{$^*$Corresponding author. She is partially supported by the Youth Project of Hunan Provincial Natural Science Foundation of China (No. 2024JJ6417) and the Project of Education Department of Hunan Province (No. 24B0187).}

\subjclass[2020]{Primary 60J35; Secondary 31C25,  60J45.}



\keywords{Quasidiffusions, Discontinuous scale, Resolvent, Reproducing kernel, Ray-Knight compactification}

\begin{abstract}
	Quasidiffusion is an extension of regular diffusion which can be described as a Feller process on $\mathbb{R}$ with infinitesimal operator $\sL=\frac{1}{2}D_\fm D_\bs$. Here, $\bs(x)=x$ and $\fm$ refers to the (not necessarily fully supported) speed measure. In this paper, we will examine an analogous operator where the scale function $\bs$ is general and only assumed to be non-decreasing. We find that, like regular diffusion or quasidiffusion, the reproducing kernel can still be generated by two specific positive monotone solutions of the $\alpha$-harmonic equation $\sL f=\alpha f$ for each $\alpha>0$. Our main result shows that this reproducing kernel is able to induce a Markov process, which is identical to that obtained in \cite{S79} using a semigroup approach or in \cite{L23} through Dirichlet forms. Further investigations into the properties of this process will be presented.
	\keywords{Quasidiffusions \and Discontinuous scale \and Resolvent \and Reproducing kernel \and Ray-Knight compactification}
\end{abstract}
\maketitle
\section{Introduction}\label{SEC1}

Quasidiffusion is a strong Markov process that shares many similarities with diffusion process on a line. It arises from a spectral theory initialized by Kac and Krein, known as Krein correspondence, in \cite{KK74}. Its applications and generalizations in Markov processes and stochastic analysis have been discussed by several authors, such as Kasahara, Kotani, Watanabe, K\"uchler, Sch\"utze, among others. These discussions can be found in, e.g., \cite{W74, K75, S79, KW82, BK87}. In the aforementioned research, the analytic characterization of quasidiffusion and related processes are mostly done within the framework of Feller processes, whose analytic foundation is the theory of strongly continuous semigroups on Banach spaces consisting of continuous functions. Thanks to the symmetry of quasidiffusion, the first author of the current paper has recently utilized Dirichlet form theory to consider an alternative analytic characterization of these processes in several articles including \cite{L23, L23b}. The Dirichlet form is a closed quadratic form on an $L^2$-space that satisfies the Markov property, and its relevant theory can be found in, e.g., the monographs \cite{CF12, FOT11}.

The main objective of this article is to make the representation for the resolvents of quasidiffusion and its extensions simultaneously within the frameworks of both Feller and $L^2$. Roughly speaking, the resolvent is the Laplace transform of the transition function of a Markov process. The study of the resolvents of Markov processes is an important and classical problem. In fact, It\^o and McKean extensively investigated the resolvent theory of diffusion processes in their seminal work \cite{IM74}. Similar studies can also be found in \cite{M68}. However, it was not until 2014 that Fukushima \cite{F14}, with the help of Dirichlet form theory, clarified the resolvents of diffusion processes satisfying general boundary conditions under the assumption of symmetry in both Feller and $L^2$-frameworks. There have also been profound studies on the resolvents of quasidiffusions, such as \cite{K75, KW82, K80}. Krein's correspondence is actually their resolvent theory in the $L^2$-framework. Ogura \cite{O89} focused on the similar topic for the case of Dirichlet boundaries, where the corresponding process solely exhibits absorbing behavior at the boundaries. The second named author of this paper also attempted to characterize the resolvents of a class of generalizations for quasidiffusion in her recent work \cite{LM20}. This paper will build upon that foundation to complete this research.

A quasidiffusion typically arises in terms of a pair $(\bs,\fm)$, where $\bs(x)=x$ and $\fm$ is a positive Radon measure on $\bR$ known as the \emph{speed measure}. (Assume without loss of generality that $\fm(\{0\})=0$.) It is a Feller process corresponding to the infinitesimal operator $\sL=\frac{1}{2}D_\fm D_\bs$. Here, given a function $f$ on $\bR$,  $g=D_\fm D_\bs f$ means that $g\in L^1_\text{loc}(\bR, \fm)$ and there exist two numbers $a,b\in \bR$ such that
\begin{equation}\label{eq:12}
	f(x)=a+bx+\int_0^{x-} (x-z)g(z)\fm(dz),\quad x\in \bR.
\end{equation}
Note that every $f$ in \eqref{eq:12} is continuous on $\bR$ and linear on intervals where $\fm$ does not charge. The right or left $\bs$-derivative of $f$ exists and can be expressed as
\begin{equation}\label{eq:12-2}
	D_\bs^\pm f(x):=\lim_{\varepsilon\downarrow 0}\frac{f(x\pm \varepsilon)-f(x)}{\pm \varepsilon}=b+\int_0^{x\pm}g d\fm,\quad x\in \bR.
\end{equation}
A more detailed summary to quasidiffusion and its infinitesimal operator will be presented in Section~\ref{SEC2}.
When $\fm$ is fully supported, such a quasidiffusion reduces to a regular diffusion process (on its natural scale) on $\bR$ in the sense of, e.g., \cite[VII, \S3]{RY99}.  Similar to regular diffusion, the resolvent of the quasidiffusion is obtained through solving the $\alpha$-harmonic equation
\begin{equation}\label{eq:11}
	\frac{1}{2}D_\fm D_\bs f=\alpha f
\end{equation}
for each $\alpha>0$. The density function of the resolvent with respect to $\fm$ is usually called the \emph{reproducing kernel} due to a representation result in some Hilbert space; see, e.g., \cite{F14}. It can be expressed as the product of two particular positive monotone solutions of \eqref{eq:11}, as stated in Definition~\ref{DEF25}. These two monotone solutions, one increasing and one decreasing, are determined by the behaviour of the process at the left and right boundary points, respectively. This representation of resolvents for quasidiffusion is identical to that of regular diffusion established by Fukushima in \cite{F14}. (Note that this was already observed by It\^o in \cite[Chapter 5]{I06}.) A detailed explanation is provided in Section 2.

We are interested in the generalization of quasidiffusion provided by a more general pair $(\bs,\fm)$, where $\bs$ is only assumed to be a non-decreasing function. Some non-restrictive assumptions will be stated in Definition~\ref{DEF31}. The operator $\sL=\frac{1}{2}D_\fm D_\bs$ is still well defined, but \eqref{eq:12} should be replaced by
\[f(x)=a+b(\bs(x)-\bs(0))+\int_0^{x-} (\bs(x)-\bs(z))g(z)\fm(dz),\quad x\in \bR.
\]
This generalization has been investigated in the context of Markov processes by Sch\"utz \cite{S79}, in the context of sample paths and limit theorem by Ogura \cite{O89} and in the context of Dirichlet forms by the first named author \cite{L23}.
Just like in the case of quasidiffusion, two positive monotone solutions of \eqref{eq:11} still yield a reproducing kernel for general $(\bs,\fm)$, as stated in Definition~\ref{DEF38}. However, finding a “nice” Markov process corresponding to this reproducing kernel is not as straightforward.

In this paper, we will solve this problem completely by considering several different cases separately:
\begin{itemize}
	\item[-] $\bs$ is strictly increasing and continuous;
	\item[-] $\bs$ is strictly increasing;
	\item[-] $\bs$ is non-decreasing.
\end{itemize}
It is worth pointing out that for each case the speed measure $\fm$ is not necessarily fully supported.

The first case is the simplest because $\bs$ constitutes a homeomorphism, which allows us to transfer the situation into the case of quasidiffusion. This case will be examined in Section~\ref{SEC4}.

The second case is the most interesting. We will consider it for a fully supported speed measure that has an isolated mass at points where $\bs$ is neither right nor left continuous in Section~\ref{SEC5}, and for a general speed measure in Section~\ref{SEC6}. The assumption on $(\bs,\fm)$ in Section~\ref{SEC5} is the same as that in \cite{S79}, so it is not surprising that the corresponding Markov process of the reproducing kernel is precisely the process obtained in \cite[Theorem 4.1]{S79}, as shown in Theorem~\ref{THM51}. This process is continuous but does not satisfy the strong Markov property if $\bs$ is not continuous. We can obtain a Feller “version” of it using the standard Ray-Knight compactification method. The continuous extension of the original reproducing kernel on the completion of the state space with respect to $\bs$ corresponds to the Ray-Knight compactification of the original Markov process. More information on Ray-Knight compactification can be found in \cite[\S17]{S88} or Appendix B of \cite{L23}.

For the general speed measure in Section~\ref{SEC6}, the consideration is more complicated. The difficulty lies in the fact that we cannot simply consider the restriction of the reproducing kernel on the topological support of $\fm$. This is because $\bs$ may have at most countably many troublesome discontinuity points on this topological support, and such restriction operation does not yield a well-defined reproducing kernel. To address this issue, we make adjustments to the restricted reproducing kernel at these discontinuity points. It is important to note that this adjustment method is not unique. The adjusted reproducing kernel induces a symmetric, c\`adl\`ag Markov process that enjoys the skip-free property and the quasi-left-continuity, as stated in Theorem~\ref{THM67}. (For the skip-free property, see \cite{BK87} and \cite{L23b}.) Similarly, this process does not always satisfy the strong Markov property. 

In the general case where $\bs$ is only non-decreasing, $\bs$ may be constant on certain intervals. This leads to the reproducing kernel also being constant on these intervals, which is not permissible for generating a Markov process. The standard approach to address this issue is to treat these intervals as a whole and consider them as abstract points in the state space. This operation is referred to as the darning transformation in \cite{L23b}. Necessary explanations will be provided in Section \ref{SEC7}.

In contrast of \cite{O89}, where a similar resolvent approach arose, our paper's primary contribution lies in explicitly providing the resolvent expression for all potential boundary behaviors, encompassing not only absorbing case but also reflection, sojourn, and other phenomena. This significant advancement is highlighted by the careful selection of parameter $\gamma$ in different scenarios, as outlined in Definitions \ref{DEF25} and \ref{DEF38}.

We will now provide a brief explanation of the notations used in this paper. Let $\overline{\mathbb{R}}=[-\infty, \infty]$ be the extended real number system.  A set $E\subset \overline{\bR}$ is called a \emph{nearly closed subset} of $\overline{\bR}$ if $\overline E:= E\cup \{l,r\}$ is a closed subset of $\overline{\bR}$ where $l=\inf\{x: x\in E\}$ and $r=\sup\{x: x\in E\}$.  The point $l$ or $r$ is called the left or right endpoint of $E$.  Denote by $\overline{\mathscr K}$ the family of all nearly closed subsets of $\overline{\bR}$.  Set
\[
	\mathscr K:=\{E\in \overline{\mathscr K}: E\subset \bR\},
\]
and every $E\in \mathscr K$ is called a \emph{nearly closed subset} of $\bR$. If a function $f$ on $E$ is undefined at $j=l$ or $r$, we understand $f(j)$ as the limit $\lim_{x\rightarrow j}f(x)$. In addition, $f(x+)$ (resp. $f(x-)$) stands for the right (resp. left) limit of $f$ at $x$. Denote by $C(E)$ the family of all continuous functions on $E$. Its subfamily consisting of continuous functions with compact support (resp. vanishing at the endpoints not contained in $E$) is denoted by $C_c(E)$ (resp. $C_\infty(E)$).
In Section~\ref{SEC2}, the speed measure $\fm$ is given by an extended real-valued, right continuous, (not necessarily strictly) increasing and non-constant function $m$ on $\bR$. For convenience we also use $m$ to stand for the speed measure. Meanwhile, write $D_mD_x$ (resp. $D^\pm_x$) for $D_\fm D_\bs$ (resp. $D^\pm_\bs$) since $\bs(x)=x$ in this section. The notation $\int_0^{x+}$ (resp. $\int_0^{x-}$) represents the integration over $[0,x]$ (resp. $[0,x)$).


\section{Resolvent approach to quasidiffusions}\label{SEC2}

\subsection{Quasidiffusions}\label{SEC21}
Let $m$ be an extended real valued, right continuous, increasing (not necessarily strictly) and non-constant function on $\bR$.  Set $m(\pm\infty):=\lim_{x\rightarrow \pm\infty}m(x)$. Define
\begin{equation}\label{eq:51-2}
	\begin{aligned}
		 & l_0:=\inf\{x\in \bR: m(x)>-\infty\},\quad r_0:=\sup\{x\in \bR: m(x)<\infty\}, \\
		 & l:=\inf\{x>l_0: m(x)>m(l_0)\},\quad r:=\sup\{x<r_0: m(x)<m(r_0-)\}.
	\end{aligned}\end{equation}
To avoid the trivial case, assume that $l<r$.  Define
\[
	E_m:=\{x\in [l,r]\cap (l_0,r_0): m(x-\varepsilon)<m(x+\varepsilon),\forall \varepsilon>0\}.
\]
One may verify that $E_m\in \mathscr K$ and is ended by $l$ and $r$; see also \cite[Lemma~3.1]{L23b}. Let $I=\langle l,r\rangle$ denote the interval ended by $l$ and $r$, where $l\in I$ (resp. $r\in I$) if and only if $l\in E_m$ (resp. $r\in E_m$). The function $m$ corresponds to a measure on $\bR$, which will continue to be denoted by $m$ if no confusion arises.

Let 
$(W_t)_{t\geq0}$ be a Brownian motion on $\bR$, $(\Omega, \cF^W)$ is the sample space, $(\sF^W_t)$ is the filtration and $\mathbf{P}_x$ is the probability measure on $(\Omega, \cF^W)$ with $\mathbf{P}_x(W_0=x)=1$.  Further, let $\ell^W(t,x)$ be its local time normalized such that for any bounded Borel measurable function $f$ on $\bR$ and $t\geq 0$,
\[
	\int_0^t f(W_s)ds=2\int_\bR \ell^W(t,x)f(x)dx.
\]
Define $S_t:=\int_{\bR} \ell^W(t,x)m(dx)$ for $t\geq 0$ and ($\inf\emptyset:=\infty$)
\[
	\tau_t:=\inf\{u>0: S_u>t\},\quad t\geq 0.
\]
Then $\tau=(\tau_t)_{t\geq 0}$ is a strictly increasing (before $\zeta$ defined as below), right continuous family of $\sF^W_t$-stopping times with $\tau_0=0$,  $\mathbf{P}_x$-a.s.   Define
\begin{equation}\label{eq:22-2}
	\begin{gathered}
		\sF:=\sF^W,\quad \sF_t:=\sF^W_{\tau_t},\quad
		\zeta:=\inf\{t>0: W_{\tau_t}\notin (l_0,r_0)\},\\ X_t:=W_{\tau_t}, \; 0\leq t<\zeta.
	\end{gathered}
\end{equation}
Then $X=\{\Omega, \sF,\sF_t, X_t, (\mathbf{P}_x)_{x\in E_m}\}$,  called \emph{the quasidiffusion with speed measure $m$},  is a standard process with state space $E_m$ and lifetime $\zeta$; see \cite{K86}. We refer the readers to \cite{BG68} for the definition of a standard process and its related terminologies.

Throughout this paper we will assume the following condition to rule out the possibility of killing inside for quasidiffusions:
\begin{itemize}
	\item[(QK)] If $l_0>-\infty$ (resp.  $r_0<\infty$),  then $l=l_0$  (resp.  $r=r_0$).
\end{itemize}
Further explanations can be found in \cite[\S3]{L23b}.

\subsection{Feller's boundary classification for quasidiffusions}\label{SECboundary}

The boundary classification for a quasidiffusion in Feller’s sense is the same as that for a regular diffusion, as mentioned in references such as \cite{I06, F14}. Here, we state the classification specifically for the right endpoint $r$. Set, for $x>0$,
\[
	\sigma(x):=\int_0^x m(y)dy,\quad \lambda(x):=\int_0^{x+} ym(dy),
\]
and $\sigma(r):=\lim_{x\uparrow r}\sigma(x), \lambda(r):=\lim_{x\uparrow r}\lambda(x)$. Then $r$ is called
\begin{itemize}
	\item[(1)] \emph{regular}, if $\sigma(r)<\infty$ and $\lambda(r)<\infty$;
	\item[(2)] \emph{exit}, if $\sigma(r)<\infty$ and $\lambda(r)=\infty$;
	\item[(3)] \emph{entrance}, if $\sigma(r)=\infty$ and $\lambda(r)<\infty$;
	\item[(4)] \emph{natural}, if $\sigma(r)=\lambda(r)=\infty$.
\end{itemize}
In addition, $r$ is called \emph{reflecting} (resp. \emph{absorbing}) if $r$ is regular and $r\in E_m$ (resp. $r\notin E_m$). It is called \emph{instantaneously reflecting} provided that it is reflecting with $m(\{r\})=0$.

For simplification we only consider the case that the left endpoint $l$ is
instantaneously reflecting:
\begin{itemize}
	\item[(L)] $0=l\in E_m$ and $m(0)=m(0-)$.
\end{itemize}
Therefore, for our discussion, we only need to focus on the classification of the right endpoint $r$. It should be noted that the general case can be treated analogously.
Under the assumption (L) along with condition (QK), $I=[l,r\rangle=[0,r\rangle$ and $m(\bR\setminus I)=0$.  Denote the restriction of $m$ to $[0,r)$ still by $m$.

Recall that the left and right derivatives of $f$ with $D_m D_x f=g$ are expressed as \eqref{eq:12-2}. Note that $D_x f(0):=\lim_{x\downarrow 0} D^\pm_x f(x)=b$ and $D_x f(r):=\lim_{x\uparrow r}D^\pm_x f(x)=b+\int_0^{r-}gd\fm$ if $g\in L^1(\bR,m)$.

\subsection{Reproducing kernels of quasidiffusions}\label{SEC22}

With $m$ at hand,  we will formulate the resolvent $(R_\alpha)_{\alpha>0}$ of $X$.  Fix $\alpha>0$.  To begin, we define, for any $x\in [0,r)$,
\[
	u^0(x):\equiv 1,\quad u^{n+1}(x):=\int_0^x \int_0^{y+} u^n(z)dm(z)dy,\quad u(x):=\sum_{n=0}^\infty (2\alpha)^n u^n(x).
\]
By referring to \cite[II \S2, \#2]{M68}, we can see that $u$ is a solution of \eqref{eq:11} and possesses the following properties.

\begin{lemma}
	The function $u$ is a positive, increasing solution of \eqref{eq:11} with $u(0)=1$ and $D_xu(0)=0$.
\end{lemma}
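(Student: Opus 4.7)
The plan is to verify, in turn, (a) convergence of the defining series, (b) the equation $\sL u=\alpha u$, (c) the initial data $u(0)=1$, $D_xu(0)=0$, and (d) positivity and monotonicity. My first step is an elementary Fubini swap in the recursion: exchanging the order of the $y$- and $z$-integrations in $u^{n+1}(x)=\int_0^x\!\int_0^{y+}u^n(z) m(dz) dy$ yields
$$u^{n+1}(x)=\int_0^{x-}(x-z) u^n(z) m(dz),$$
the endpoint distinction being immaterial because $(x-z)$ vanishes at $z=x$.

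For (a) I would prove by induction that $u^n(x)\le (x\cdot m([0,x]))^n/n!$ for $x\in[0,r)$. The base case is immediate; for the inductive step, using the original double-integral form together with the bounds $\int_0^{y+}z^n m(dz)\le y^n m([0,y])$ and $\int_0^x y^n dy = x^{n+1}/(n+1)$ produces the estimate one factorial level higher. This bound gives $u(x)\le \exp(2\alpha x\cdot m([0,x]))<\infty$ on $[0,r)$ and uniform convergence of the series on compact subsets. I expect this to be the most technical step, since one has to keep track of the interplay between the weight $(x-z)$, the iterated integrations, and possible atoms of $m$, but the bookkeeping is routine once the correct bound is identified.

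With convergence secured, multiplying the Fubini identity above by $(2\alpha)^{n+1}$ and summing over $n\ge 0$---with monotone convergence justifying the interchange of sum and integral, as every summand is nonnegative---delivers the Volterra-type equation
$$u(x)=1+2\alpha\int_0^{x-}(x-z) u(z) m(dz),\quad x\in[0,r).$$
This is exactly the representation \eqref{eq:12} with $a=1$, $b=0$, and $g=2\alpha u$, whose local $m$-integrability is furnished by the bound in (a). Hence $D_mD_x u = 2\alpha u$, i.e., $\sL u=\alpha u$, while $u(0)=1$ and $D_x u(0)=b=0$ are read directly from \eqref{eq:12}--\eqref{eq:12-2}.

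For (d), a straightforward induction shows that each $u^n$ is nonnegative and nondecreasing in $x$ (the outer integration $\int_0^x dy$ preserves both properties), so $u$ is a sum of nonnegative nondecreasing functions; combined with $u\ge u^0\equiv 1$, this settles positivity and monotonicity. The formula $D_x^+ u(x)=2\alpha\int_0^{x+} u(z) m(dz)$ from \eqref{eq:12-2} further pinpoints the portion of $[0,r)$ on which $u$ strictly increases.
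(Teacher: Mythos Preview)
Your proof is correct and self-contained. The paper itself does not give an argument for this lemma: it simply cites \cite[II \S2, \#2]{M68} for the fact that the series $u=\sum (2\alpha)^n u^n$ solves \eqref{eq:11} with the stated initial data and monotonicity. What you have written is essentially the standard derivation that underlies that citation---the Fubini swap turning the iteration into the Volterra form \eqref{eq:12}, the factorial bound $u^n(x)\le (x\,m([0,x]))^n/n!$ for convergence, and the termwise monotonicity---so your approach and the reference's approach coincide in substance; you have merely unpacked the argument rather than deferring to it. One small presentational point: in your sketch of the inductive bound you write $\int_0^{y+}z^n\,m(dz)\le y^n m([0,y])$, whereas what the induction actually needs is $\int_0^{y+}(z\,m([0,z]))^n\,m(dz)\le (y\,m([0,y]))^n m([0,y])$; this follows by the same monotonicity reasoning, so the gap is cosmetic.
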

\begin{remark}
	In the representation of the reproducing kernel of $X$ (see Definition~\ref{DEF25}),
	this increasing solution $u$ corresponds to the instantaneously reflecting property of the left endpoint $l$ (assumed by (L)).  When considering a general boundary condition at $l$,  we should choose another positive, increasing solution using a similar argument to the one used to obtain the positive, decreasing solution $v$ as shown below. See also Example~\ref{EXA43}.
\end{remark}

We need another positive, decreasing solution $v$ of \eqref{eq:11}.  To obtain it, define
\[
	u^+(x):=u(x) \int_x^r u(y)^{-2}dy,\quad  u^-(x):=u(x) \int_0^x u(y)^{-2}dy,\quad x\in [0,r).
\]
Then $u^+$ is a decreasing solution of \eqref{eq:11},  while $u^-$ is an increasing solution of \eqref{eq:11} with $u^-(0)=0$ and $D_xu^-(0)=1$;  see,  e.g.,  \cite[II \S2, \#3]{M68}.  The lemma below provides all positive, decreasing solutions of \eqref{eq:11}.

\begin{lemma}\label{LM23}
	Every positive, decreasing solution of \eqref{eq:11} can be represented as
	\begin{equation}\label{eq:21}
		u-\gamma u^-
	\end{equation}
	for a constant $\gamma\in [\underline{\gamma}, \bar{\gamma}]$, up to a multiplicative constant,  where
	\[
		\bar{\gamma}=\left(\int_0^ru(y)^{-2}dy\right)^{-1},\quad \underline{\gamma}=\frac{\bar{\gamma}}{1+\bar{\gamma}/(D_x u(r)u(r))}.
	\]
\end{lemma}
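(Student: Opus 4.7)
The plan is to exploit the fact that the $\alpha$-harmonic equation \eqref{eq:11}, being a generalized second-order ODE, has a two-dimensional solution space. The two particular solutions $u$ and $u^-$ are visibly linearly independent: $u(0)=1, D_x u(0)=0$, while $u^-(0)=0, D_x u^-(0)=1$. Hence every solution $f$ of \eqref{eq:11} can be written uniquely as $f=au+bu^-$. If $f$ is positive on $[0,r)$, then $f(0)=a>0$, so after dividing by the positive multiplicative constant $a$ we may and do assume $a=1$ and write $f=u-\gamma u^-$ with $\gamma:=-b/a\in\bR$. Thus the whole task is to identify the set of $\gamma$'s that render $f_\gamma:=u-\gamma u^-$ both positive and decreasing on $[0,r)$.

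For the upper bound, note that positivity of $f_\gamma$ on $(0,r)$ amounts to $\gamma\le u(x)/u^-(x)=1/\int_0^x u(y)^{-2}dy$ for every $x\in(0,r)$. The right-hand side is strictly decreasing in $x$ and tends to $\bar\gamma=\bigl(\int_0^r u(y)^{-2}dy\bigr)^{-1}$ as $x\uparrow r$, so positivity on $[0,r)$ holds precisely when $\gamma\le\bar\gamma$. For the lower bound, I use the fact that when $f_\gamma\ge 0$ the defining identity $D_m D_x f_\gamma=2\alpha f_\gamma$ integrated against $m$ gives
\[
	D_x f_\gamma(x)-D_x f_\gamma(0)=2\alpha\int_0^{x} f_\gamma\, dm\ge 0,
\]
so $D_x f_\gamma$ is non-decreasing on $[0,r)$. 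Consequently $f_\gamma$ is decreasing on $[0,r)$ if and only if $D_x f_\gamma(r-)\le 0$ (the left endpoint already gives $D_x f_\gamma(0)=-\gamma\le 0$ as soon as $\gamma\ge 0$, which is in turn forced by $f_\gamma$ being decreasing near $0$).

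The concrete value of $D_x f_\gamma(r)$ is obtained from the Wronskian identity. Because $u$ and $u^-$ both solve \eqref{eq:11}, the expression $u\,D_x u^- - u^-\,D_x u$ is a constant of $x$; evaluating at $0$ gives $u\,D_x u^- - u^-\,D_x u\equiv 1$. Combining this with the explicit formula $u^-(r)=u(r)/\bar\gamma$ yields
\[
	D_x u^-(r)=\frac{1}{u(r)}+\frac{D_x u(r)}{\bar\gamma},
\]
so the condition $D_x f_\gamma(r)=D_x u(r)-\gamma D_x u^-(r)\le 0$ rearranges into $\gamma\ge \underline\gamma=\bar\gamma/\bigl(1+\bar\gamma/(D_x u(r)\,u(r))\bigr)$. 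Conversely, for any $\gamma\in[\underline\gamma,\bar\gamma]$ these two inequalities hold, so $f_\gamma$ is indeed a positive decreasing solution.

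The main delicate point I expect is the passage to the boundary at $r$: the quantities $u(r)$, $D_x u(r)$, and $\int_0^r u^{-2}dy$ may be finite or infinite depending on Feller's boundary type, so the formulas for $\bar\gamma$ and $\underline\gamma$ must be interpreted with the usual conventions $1/\infty=0$ and $1/0=\infty$. The degenerate cases (e.g.\ natural $r$, where $\bar\gamma=0$ forces $\gamma=0$ and one recovers $u$ itself as the unique up-to-scaling positive decreasing solution, or entrance/exit/regular $r$ where one or both bounds can be strict) should be checked case by case, but in each situation the monotonicity of $x\mapsto 1/\int_0^x u^{-2}$ and of $x\mapsto D_x f_\gamma$ reduces verification to the two boundary inequalities above.
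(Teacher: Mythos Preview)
Your argument is correct and shares the paper's overall scaffolding: expand in the basis $\{u,u^-\}$, normalize to $u-\gamma u^-$, read off $\bar\gamma$ from the positivity inequality $\gamma\le u(x)/u^-(x)=1/\int_0^x u^{-2}$, and obtain $\underline\gamma$ from a derivative condition at $r$. The one genuine methodological difference lies in how the lower bound is justified. The paper shows that the \emph{ratio} $D_x^+u/D_x^+u^-$ is increasing by computing its $D_m$-derivative through a Stieltjes quotient rule (citing \cite{LM20}) and then identifies its limit via the explicit formula $D_x^+u/D_x^+u^-=\bigl(\int_0^x u^{-2}+1/(u\,D_x^+u)\bigr)^{-1}$; this formula is valid at every interior point, so the passage $x\uparrow r$ is uniform across all boundary types. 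You instead observe directly from the equation that $D_m D_x f_\gamma=2\alpha f_\gamma\ge 0$ once positivity is in hand, so $D_x f_\gamma$ is non-decreasing and the entire ``decreasing'' requirement collapses to the single inequality $D_x f_\gamma(r-)\le 0$, which you then unpack with the Wronskian relation $u\,D_xu^- - u^-\,D_xu\equiv 1$. Your route is more elementary (no quotient rule for Stieltjes derivatives is needed) but is conditional on first having $\gamma\le\bar\gamma$, and it pushes the Wronskian identity to $r$ rather than using it pointwise---which is exactly where the case analysis you anticipate enters when $u(r)$ or $D_xu(r)$ is infinite. Both routes compute the same limit and arrive at the same $\underline\gamma$.
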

\begin{proof}
	Let $v$ be a positive and decreasing solution of \eqref{eq:11}.
	According to \cite[II,  \S3, \#6]{M68},  there exist constants $c_1$ and $c_2$ such that $v=c_1 u+c_2 u^-$.  Since $v$ is positive,  we have
	\[
		0\leq v(0)=c_1 u(0)+c_2 u^-(0)=c_1.
	\]
	It is evident that $c_1\neq 0$ because otherwise $v=c_2 u^-$ would be either negative or increasing.
	Since $v$ is decreasing,  we obtain
	\[
		0\geq D_xv(0)=c_1 D_x u(0)+c_2 D_xu^-(0)=c_2.
	\]
	Similarly, we have $c_2\neq 0$.
	Hence we may and do assume that $v=u-\gamma u^-$ for some constant $\gamma>0$.  Note that $v$ is positive, decreasing if and only if
	\begin{equation}\label{eq:24}
		\sup_{x}\frac{D_x^+ u(x)}{D_x^+u^-(x)}\leq \gamma\leq \inf_x\frac{u(x)}{u^-(x)}.
	\end{equation}
	Obviously $\inf_x\frac{u(x)}{u^-(x)}=\bar{\gamma}$.  A direct computation gives
	\begin{equation}\label{eq:25}
		\frac{D_x^+ u(x)}{D_x^+u^-(x)}=\left(\int_0^x u^{-2}dy+\frac{1}{u(x)D_x^+u(x)}\right)^{-1}.
	\end{equation}
	On the other hand,  since $D^+_xu^-(x)\geq D_xu^-(0)= 1$,  it follows that $D^+_xu/D^+_x u^-$ has bounded variation.  In addition, according to  \cite[Proposition~2.3]{LM20},
	\[
		\begin{aligned}
			D_m\left(\frac{D_x^+ u}{D_x^+u^-}\right)(x)
			 & =\frac{-D_mD_x u^-(x) D_x^+u(x)+D_x^+ u^-(x)D_mD_x u(x)}{D_x^+u^-(x)D_x^+u^-(x-)}             \\
			 & =\frac{2\alpha}{D_x^+u^-(x)D_x^+u^-(x-)}\left(-u^-(x)D_x^+u(x)+D_x^+u^-(x)u(x)\right)         \\
			 & =\frac{2\alpha}{D_x^+u^-(x)D_x^+u^-(x-)} W(u^-,u)\\
			 &=\frac{2\alpha}{D_x^+u^-(x)D_x^+u^-(x-)} >0.
		\end{aligned}\]
	Thus, $D_x^+ u(x)/D_x^+u^-(x)$ is increasing in $x$.  Particularly,  it follows from \eqref{eq:25} that the lower bound in \eqref{eq:24} is equal to
	\[
		\lim_{x\uparrow r}\frac{D_x^+u(x)}{D_x^+u^-(x)}=\underline{\gamma}.
	\]
	This completes the proof.
\end{proof}
\begin{remark}
	Obviously, we have $\underline{\gamma}\leq \bar{\gamma}$.  Note that $\underline{\gamma}< \bar{\gamma}$ if and only if $r$ is regular.
\end{remark}

Now we are in a position to select the decreasing solution $v$ of \eqref{eq:11} that determines the resolvent $(R_\alpha)_{\alpha>0}$.  When $r$ is not regular,  $\bar{\gamma}=\underline{\gamma}$ and we take
\[
	v:=u-\bar\gamma u^-=\bar\gamma u^+.
\]
When $r$ is regular, we choose $v$ as defined in \eqref{eq:21}, with the following value for $\gamma$:
\[
	\gamma=\left \lbrace
	\begin{aligned}
		 & \bar{\gamma},\qquad\qquad \qquad \qquad \qquad \qquad\qquad\qquad                                & r\text{ is absorbing},  \\
		 & \bar{\gamma}-\frac{\bar{\gamma}^2}{\bar{\gamma}+D_xu(r)u(r)+2\alpha u(r)^2 m(\{r\})},\quad\;\;\; & r\text{ is reflecting}.
	\end{aligned}\right.
\]
Note that $\gamma$ for the reflecting case is equal to $\underline{\gamma}$ if $m(\{r\})=0$. We denote the Wronskian of $u$ and $v$ by
\[
	W:=W(u,v)=D_x^+u(x)v(x)-D_x^+v(x)u(x),
\]
which is independent of $x$; see,  e.g.,  \cite[II \S2, \#5]{M68}.

\begin{definition}\label{DEF25}
	Let $u$ and $v$ be selected as described earlier. The \emph{reproducing kernel} $g_\alpha$,  $\alpha>0$,  of $X$ on $E_m\times E_m$ is defined as
	\[
		g_\alpha(x,y):=\left\lbrace
		\begin{aligned}
			 & W^{-1}u(x)v(y),\quad x\leq y, \\
			 & W^{-1}u(y)v(x),\quad y\leq x,
		\end{aligned}
		\right.
	\]
	where $W^{-1}$ is the reciprocal of the Wronskian $W$.
\end{definition}

\subsection{Resolvent in the $L^2$-framework}

The quasidiffusion is symmetric with respect to $m$,  and its associated Dirichlet form $(\sE,\sF)$ on $L^2(E_m,m)$ is formulated in,  e.g., \cite[Theorem~3.1]{L23}.  The following result,  as an analogue of \cite[Corollary~2.5,  Theorems~2.6 and 5.3]{F14},  presents the resolvent and the generator of $(\sE,\sF)$ in the $L^2$-sense.  The proof is straightforward and follows the arguments in \cite{F14}, so we omit it.

\begin{theorem}\label{THM26}
	Let $(\sE,\sF)$ be the Dirichlet form on $L^2(E_m,m)$ associated to the quasidiffusion $X$.  Then the strongly continuous contraction resolvent of $(\sE,\sF)$ on $L^2(E_m,m)$ is given by
	\[
		G_\alpha f(\cdot)=\int_{E_m} g_\alpha(\cdot,y)f(y)m(dy),\quad \alpha>0, f\in L^2(E_m,m),
	\]
	where $g_\alpha$ is the reproducing kernel of $X$ defined in Definition~\ref{DEF25}.  Furthermore,  the generator $\mathscr A$ with domain $\cD(\mathscr A)$ of $(\sE,\sF)$ on $L^2(E_m,m)$ is expressed as follows:
	\[
		\begin{aligned}
			\cD(\sA)=\{f\in \sF: & D_mD_x f\in L^2(E_m,m), D_x f(0)=0,                                   \\
			                     & D_x f(r)=0\text{ whenever }r\text{ is  instantaneously reflecting}\},
		\end{aligned}\]
	and  for $f\in \cD(\sA)$,
	\[
		\sA f(x)=\left\lbrace
		\begin{aligned}
			 & \frac{1}{2}D_mD_x f(x),\quad x\in E_m\cap (0,r),                           \\
			 & -\frac{D_x f(r)}{2m(\{r\})},\quad\;\;\text{if }x=r\text{ with }m(\{r\})>0.
		\end{aligned} \right.
	\]
\end{theorem}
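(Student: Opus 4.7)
The plan is to follow Fukushima's strategy from \cite{F14}: first establish the resolvent identity $\sE_\alpha(G_\alpha f, g) = (f,g)_{L^2(E_m,m)}$ for every $f\in L^2(E_m,m)$ and $g\in \sF$ (with $\sE_\alpha := \sE + \alpha(\cdot,\cdot)_{L^2}$), which by uniqueness identifies $G_\alpha$ with the resolvent of $(\sE,\sF)$; and then use the characterization $f\in \cD(\sA) \iff f = G_\alpha h$ for some $h\in L^2$ to read off the generator.

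For the resolvent identity I would split
\[
G_\alpha f(x) = W^{-1}v(x)\int_0^x u(y) f(y)\, m(dy) + W^{-1}u(x)\int_x^{r} v(y) f(y)\, m(dy),
\]
and compute $D_x^\pm G_\alpha f$ and $D_mD_x G_\alpha f$ directly from \eqref{eq:12-2}, exploiting $\tfrac12 D_mD_x u = \alpha u$, $\tfrac12 D_mD_x v = \alpha v$ and the constancy of $W(u,v)$. The boundary-contribution terms cancel thanks to the Wronskian identity, and yield
\[
\tfrac12 D_mD_x G_\alpha f = \alpha G_\alpha f - f \quad \text{on } E_m\cap(0,r),
\]
while the calibrations $D_xu(0)=0$ and the case-by-case choice of $v$ at $r$ encode the right endpoint behaviour as derivative conditions on $G_\alpha f$. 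An integration by parts against the explicit expression for $\sE$ from \cite[Theorem~3.1]{L23} then delivers the resolvent identity.

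For the generator, if $f\in \cD(\sA)$ then $f = G_\alpha h$ with $h := \alpha f - \sA f\in L^2$, so the preceding computation yields $D_mD_x f = 2(\alpha f - h)\in L^2(E_m,m)$, the condition $D_xf(0) = 0$, and, in the instantaneously reflecting case at $r$, the condition $D_xf(r) = 0$. The converse inclusion is obtained by showing that for $f$ in the displayed domain the function $\varphi := f - G_\alpha(\alpha f - \tfrac12 D_mD_x f)$ solves \eqref{eq:11} with matching derivative boundary data at $0$ and $r$, hence is a scalar multiple of both $u$ and $v$, and so vanishes. The main obstacle is the reflecting case with $m(\{r\})>0$: there $r$ carries atomic speed measure, so reading off $\sA f$ at $x=r$ requires reconciling the jump of $D_x G_\alpha f$ across the atom with the pointwise identity $\sA f(r) = -D_xf(r)/(2m(\{r\}))$. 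The atomic contribution has to be tracked both in the computation of $D_mD_x G_\alpha f$ and in the integration by parts, and one must verify that the special value of $\gamma$ chosen for $v$ in the reflecting case is precisely what makes the two descriptions of the resolvent and the generator agree at that boundary atom; this bookkeeping is where the bulk of the work concentrates.
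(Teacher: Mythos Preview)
Your proposal is correct and matches what the paper does: the paper omits the proof entirely, stating only that it ``is straightforward and follows the arguments in \cite{F14}'', which is precisely the strategy you outline. Your sketch of the resolvent identity via direct computation of $D_mD_xG_\alpha f$ and the generator characterization via $f=G_\alpha h$ is the standard Fukushima argument, and your identification of the atom at $r$ in the reflecting case as the delicate bookkeeping point is accurate.
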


\subsection{Resolvent in Feller's framework}

Let $E^*:=E_m$ if $r$ is not entrance,  and $E^*:=E_m\cup \{r\}$ if $r$ is entrance.  When $r$ is entrance,  we denote the zero extension of $m$ to $E^*$ as $m$ as well.  Denote by $C(E^*)$ the family of all continuous functions on $E^*$.  Note that every $f\in C(E^*)$ can be treated as a continuous function on $I$,  which is linear on intervals where $m$ is constant.  Furthermore, we define $C_\infty(E^*)$ as the subfamily of $C(E^*)$ consisting of functions that vanish at $r$ if $r\notin E^*$. The definitions of Feller semigroup and Feller process are standard, and we refer the readers to, e.g., \cite{RY99}.

\begin{theorem}\label{THM27}
	The transition function $(P_t)_{t\geq 0}$ of the quasidiffusion $X$ acts on $C_\infty(E^*)$ as a Feller semigroup,  whose resolvent is given by
	\begin{equation}\label{eq:22}
		R_\alpha f(\cdot)=\int_{E^*} g_\alpha(\cdot,y)f(y)m(dy),\quad f\in C_\infty(E^*).
	\end{equation}
	Furthermore,  the infinitesimal operator $\sL$ with domain $\cD(\sL)$ of $(P_t)_{t\geq 0}$ on $C_\infty(E^*)$ is expressed as follows:
	\[
		\begin{aligned}
			\cD(\sL)
			=\{f\in C_\infty(E^*)&:~D_mD_x f\in C_\infty(E^*),  D_xf(0)=0,                               \\
			            &D_x f(r)=0\text{ whenever }r\text{ is instantaneously reflecting}\},
		\end{aligned}\]
	and for  $f\in \cD(\sL)$,
	\[
		\sL f(x)=\left\lbrace
		\begin{aligned}
			 & \frac{1}{2}D_mD_x f(x),\quad x\in E^*\cap [0,r),                             \\
			 & -\frac{D_x f(r)}{2m(\{r\})},\quad\;\;  \text{if }x=r\text{ with }m(\{r\})>0.
		\end{aligned} \right.
	\]
\end{theorem}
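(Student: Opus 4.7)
The plan is to prove the theorem in the spirit of Fukushima's argument for regular diffusions in [F14], with the integral operator $\tilde R_\alpha f(x) := \int_{E^*} g_\alpha(x,y) f(y) m(dy)$ as the central object. I will first show that $\tilde R_\alpha$ is a strongly continuous contraction resolvent on $C_\infty(E^*)$ satisfying the Hille--Yosida conditions, then identify the resulting Feller semigroup with the transition function of $X$ via the $L^2$ representation in Theorem~\ref{THM26}, and finally read off the generator directly from the explicit form of $\tilde R_\alpha$.

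For the first phase, I would exploit the product decomposition
\[
	\tilde R_\alpha f(x) = W^{-1}\Bigl[v(x)\int_{[0,x]} u(y) f(y) m(dy) + u(x)\int_{(x,r)} v(y) f(y) m(dy)\Bigr],
\]
valid on $E_m\cap[0,r)$. Continuity on the interior is immediate from the continuity of $u$ and $v$; the contraction property follows because $\alpha \tilde R_\alpha 1 \le 1$, a direct consequence of $\sL u = \alpha u$ and $\sL v = \alpha v$. The boundary behaviour at $r$ requires Feller's classification from Section~\ref{SECboundary}: when $r$ is entrance, $u(r)<\infty$ (via $\lambda(r)<\infty$) and the second integral is well controlled so $\tilde R_\alpha f$ extends continuously to $r$; when $r$ is exit or natural, the specific asymptotics of $u^+$ and $u^-$ together with the choice $v=\bar\gamma u^+$ force $\tilde R_\alpha f(x)\to 0$ as $x\uparrow r$, placing $\tilde R_\alpha f$ in $C_\infty(E^*)$. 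Strong continuity $\alpha \tilde R_\alpha f\to f$ uniformly then follows from the resolvent equation (derived below) by a standard Abelian argument.

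For the generator, given $f = \tilde R_\alpha h$ with $h\in C_\infty(E^*)$, differentiation of the product decomposition yields $D_x^+ f = W^{-1}[D_x^+ v\cdot A + D_x^+ u\cdot B]$ where $A(x)=\int_{[0,x]} uh\,dm$ and $B(x)=\int_{(x,r)} vh\,dm$, the cross terms cancelling in the limit. A second differentiation against $m$, combined with $D_m D_x u = 2\alpha u$, $D_m D_x v = 2\alpha v$, and the Wronskian identity $W = D_x^+ u\,v - D_x^+ v\,u$, gives $D_m D_x^+ f = 2\alpha f - 2h$, so $\sL f = \alpha f - h \in C_\infty(E^*)$. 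The boundary condition $D_x f(0)=0$ follows from $A(0)=0$ and $D_x^+ u(0)=0$, while $D_x f(r)=0$ in the instantaneously reflecting case uses the specific choice of $\gamma$, which was designed so that $D_x^- v(r)=0$. The converse inclusion is a uniqueness statement: any $f$ in the claimed domain satisfying $(\alpha - \sL) f = h$ coincides with $\tilde R_\alpha h$, since their difference solves the $\alpha$-harmonic equation with both boundary conditions and must vanish. To match $\tilde R_\alpha$ with the actual resolvent of $X$, Theorem~\ref{THM26} gives $\tilde R_\alpha f = G_\alpha f$ in $L^2(E_m,m)$, and by $m$-symmetry of the standard process $X$, the resolvent $R_\alpha^X f$ coincides with $G_\alpha f$ $m$-a.e. on $E^*$; pointwise equality follows from the continuity established in the first phase and the density of the $m$-support.

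The main obstacle will be the boundary analysis at $r$: verifying the continuous extension of $\tilde R_\alpha f$ in the entrance case and the vanishing in the exit/natural cases requires a careful case analysis of the asymptotic behaviour of $u$ and $v$ built from Feller's integral tests, together with handling the potential atom $m(\{r\})$ in the reflecting subcase. Once this boundary bookkeeping is in place, all the remaining steps---the resolvent identity, Hille--Yosida, and the generator computation---are essentially algebraic consequences of $\sL u=\alpha u$, $\sL v=\alpha v$, and the Wronskian formula.
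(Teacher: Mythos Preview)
Your overall strategy is sound and follows the Fukushima template you cite, but it differs from the paper's proof in one structural respect worth flagging. The paper does not verify Hille--Yosida for $\tilde R_\alpha$ from scratch. Instead, after checking that $R_\alpha f\in C_\infty(E^*)$ (and only the entrance case is spelled out, the others being referred to \cite{F14}), it invokes \cite[II \S5, Theorem~3]{M68} as a black box: that result already says the operator $(\sL,\cD(\sL))$ with the stated boundary conditions generates \emph{some} Feller semigroup $\bar P_t$ with resolvent $\bar R_\alpha$. The remaining work is then a short uniqueness argument: both $R_\alpha f$ and $\bar R_\alpha f$ satisfy $(\alpha-\tfrac12 D_mD_x)w=f$ with $D_xw(0)=0$, so their difference is a multiple of $u^+$, and $D_xu^+(0)=-1$ forces the multiple to vanish. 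Your route---building the resolvent identity, contraction, strong continuity, and the generator computation directly from the product formula for $\tilde R_\alpha$---is more self-contained and recovers the content of Mandl's theorem in this special case, at the cost of the boundary case analysis you anticipate.

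One step in your plan deserves more care: the passage from $\tilde R_\alpha f=G_\alpha f$ $m$-a.e.\ to the identification with the \emph{pointwise} resolvent $R^X_\alpha f$ of the quasidiffusion. You argue that continuity of $\tilde R_\alpha f$ plus density of the support gives pointwise equality, but this presupposes that $R^X_\alpha f$ is itself continuous on $E_m$, which is part of what the theorem asserts. The paper sidesteps this by working the other way around: once $\bar R_\alpha=R_\alpha=G_\alpha$ on $C_\infty(E^*)\cap L^2$, the Feller process associated to $\bar P_t$ is $m$-symmetric with Dirichlet form $(\sE,\sF)$, hence coincides with $X$ (as a standard process) by the uniqueness in Dirichlet form theory. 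If you want to keep your direction of argument, you should instead use that $X$ is a standard process, so $R^X_\alpha f$ is $\alpha$-excessive and finely continuous, and that single points of $E_m$ are not $m$-polar; this upgrades $m$-a.e.\ equality to everywhere equality on $E_m$.
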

\begin{proof}
	The result can be also obtained by mimicking the arguments in \cite[Proposition~6.1]{F14},  while the proof in \cite{F14} does not provide all the details for the case where $r$ is entrance. We will now complement this part. In this case, we have $C_\infty(E^*)=C(E^*)\subset L^2(E^*,m)$.

	Let us prove that $R_\alpha f\in C(E^*)$ for every $f\in C(E^*)$. Consider the first case $f\in C_c(E_m)$. Since $R_\alpha f=G_\alpha f\in \sF\subset C(E_m)$, it suffices to show that $\lim_{x\uparrow r}R_\alpha f(x)$ exists and  is finite.  In fact,
	\[
		\lim_{x\uparrow r} R_\alpha f(x)=W^{-1}\lim_{x\uparrow r} v(x)\int_{E_m} u(y)f(y)m(dy).
	\]
	Note that $v=\bar{\gamma} u^+$ admits a finite limit at $r$.  Hence $R_\alpha f\in C(E^*)$.  When $f\in C_\infty(E_m)$, by taking $f_n\in C_c(E_m)$ with $\|f_n-f\|_\infty\rightarrow 0$ and noting that $R_\alpha f_n\in C(E^*)$,  we can conclude that $R_\alpha f\in C(E^*)$ on account of
	\[
		\alpha \|R_\alpha f_n-R_\alpha f\|_\infty=\alpha \|G_\alpha f_n-G_\alpha f\|_\infty\leq  \|f_n-f\|_\infty.
	\]
	The inequality is due to $1_{E^*}\in L^2(E_m,m)$ and the Markovian property of $\alpha G_\alpha$; see, e.g., \cite[Theorem~1.4.1]{FOT11}.
	For a general $f\in C(E^*)$,  let $f_0:=f-f(r)\cdot 1_{E^*}\in C_\infty(E_m)$. We have
	\[
		R_\alpha f=R_\alpha f_0 + f(r)\cdot R_\alpha 1_{E^*}.
	\]
	Since $R_\alpha f_0\in C(E^*)$ and $R_\alpha 1_{E^*}(x)=G_\alpha 1_{E^*}(x)=1/\alpha$ for any $x\in E_m$,  it follows that $R_\alpha f\in C(E^*)$.

	By virtue of \cite[II \S5, Theorem~3]{M68},  the operator $\sL$ with domain $\cD(\sL)$ is the infinitesimal operator of a certain Feller semigroup,  denoted by $\bar{P}_t$,  on $C(E^*)$.  Denote by $\bar{R}_\alpha$ the resolvent of $\bar{P}_t$.  We have to prove that $\bar{R}_\alpha f=R_\alpha f$ for $f\in C(E^*)$.  Since $\bar{R}_\alpha$ is the resolvent of $\sL$,  it follows that
	\[
		(\alpha-\frac{1}{2}D_mD_x)\bar R_\alpha f=f.
	\]
	On the other hand,  $R_\alpha f=G_\alpha f\in \cD(\sA)$ also yields
	\[
		(\alpha-\frac{1}{2}D_mD_x) R_\alpha f=f.
	\]
	Particularly,  $w:=\bar{R}_\alpha f-R_\alpha f\in C(E^*)$ is a solution of
	\[
		\frac{1}{2}D_mD_x F=\alpha F.
	\]
	In view of \cite[II,  \S4]{M68}, we have $w=C u^+$ for some constant $C$.  On account of $R_\alpha f=G_\alpha f\in \cD(\sA)$ and $\bar{R}_\alpha f\in \cD(\sL)$,  one deduces $D_x w(0)=0$. However, we know that $D_xu^+(0)=-1/u(0)=-1$. This implies that $C=0$, which in turn leads to $w=0$.  As a result, we can conclude that $\bar{R}_\alpha f=R_\alpha f$. This completes the proof.
\end{proof}
\begin{remark}
	When $r$ is entrance,  the state space $E_m$ of $X$ does not include $r$,  thus $X$ is unable to reach $r$ during its lifetime.  However the Feller resolvent \eqref{eq:22} contains information about $r$, and its corresponding Feller process $\bar{X}$ may start from $r$.  Certainly the restriction of $\bar{X}$ to $E_m$ is identified with $X$.  Conversely,  $\bar{X}$ is actually the Ray-Knight compactification of $X$, as explained in \cite[\S4.3]{L23}.
\end{remark}

\section{Reproducing kernels for general pairs}

We now consider a general pair $(\bs, \fm)$ on the interval $I=[0,r\rangle$, where $0$ is included in $I$ and $r$ may or may not be contained in $I$.  For the sake of simplification, we will focus on a pair in the following definition and classify only the right endpoint.

\begin{definition}\label{DEF31}
	The pair $(\bs,\fm)$ on $I=[0,r\rangle$ consists of a non-decreasing function $\bs$ on $I$ and a positive Radon measure $\fm$ on $I$ such that
	\begin{itemize}
		\item[(1)] $\bs(0)=\bs(0+)=0<\bs(x)$ and $\fm(\{0\})=0=\fm(0)<\fm(x)$ for any $x>0$;
		\item[(2)] $\bs(r-\varepsilon)<\bs(r-)=\bs(r)\leq \infty$ and $\fm(r-\varepsilon)<\fm(r-)$ for any $\varepsilon>0$;
		\item[(3)] If $r\in I$,  then $\bs(r)+\fm(r)<\infty$.
	\end{itemize}
	Here and hereafter $\fm$ also stands for the right continuous function on $I$ induced by the measure $\fm$ if no confusion arises.
\end{definition}
\begin{remark}
	The first condition indicates that $0$ is \emph{instantaneously reflecting} with respect to $(\bs,\fm)$.  In fact, if $\bs(x)=x$, this condition is equivalent to the condition (L) in \S\ref{SECboundary}. It loses no generality and is only used to gain simplification. In some specific examples like Examples~\ref{EXA43} and \ref{EXA52}, we will also consider other boundary conditions at the left endpoint.
	The third condition concerns the case that $r$ is reflecting with respect to $(\bs,\fm)$ (see \S\ref{SEC32}).  It is worth pointing out that if $r\in I$, then $\fm$ is a finite measure on $[0,r]$, but $\fm(\{r\})=\fm(r)-\fm(r-)$ may be positive.
\end{remark}

\subsection{Second order derivative}

The objective of this subsection is to introduce a generalized second order differential operator $D_\fm D_\bs$.  Prior to that, we need to establish some notations and terminologies.  A real-valued function $f$ on $[0,r)$ is called \emph{$\bs$-continuous} if $f$ is finite and right continuous  at $0$,  $f$ has finite limits from the left and right at each point in $(0,r)$,  the right or left continuity of $\bs$ at a point implies the same property for $f$, and $f$ is constant on intervals where $\bs$ is constant.  The set of all $\bs$-continuous functions on $[0,r)$ is denoted by $\mathcal C_\bs$.
Put
\[
	\cC_\bs^*:=\{f\in \cC_\bs: f(r):=\lim_{x\uparrow r}f(x)\text{ exists and is finite}\}.
\]
Let $\overline{\bs(I)}$ be the closure of $\bs(I)=\{\bs(x):x\in I\}$ in $\overline{\bR}$.  In view of the second condition of Definition~\ref{DEF31},  $\widehat{r}:=\bs(r)$ is not isolated in $\overline{\bs(I)}$. Furthermore, let $\widehat{I}$ be the set such that $\overline{\bs(I)}\setminus \{\widehat{r}\}\subset \widehat{I}\subset \overline{\bs(I)}$ and that $\widehat{r}\in \widehat{I}$ if and only if $r\in I$. Obviously $\widehat I\in \mathscr{K}$ and hence $[0,\widehat r)\setminus \widehat I$ can be expressed as a union of at most countably many open intervals. We denote by $\widehat{\mathcal C}$ the family of all continuous functions on $[0, \widehat{r})$ that are linear on the open components of  $[0,\widehat{r})\setminus \widehat{I}$.  We define
\[
	\widehat{\cC}^*:=\{\widehat{f}\in \widehat{\cC}: \widehat{f}(\widehat{r}):=\lim_{\widehat{x}\uparrow\widehat{r}}\widehat{f}(\widehat{x})\text{ exists and is finite}\}.
\]
The following lemma is straightforward and the proof is left to the readers.

\begin{lemma}
	There is a linear bijection $T: \mathcal{C}_\bs\rightarrow \widehat{\mathcal{C}}$ such that for any $f\in \mathcal{C}_\bs$,  $\widehat{f}:=Tf$ is determined by
	\[
		\widehat{f}(\bs(x))=f(x),\quad \widehat{f}(\bs(x\pm))=f(x\pm),\quad 0<x<r.
	\]
	Furthermore,  $T$ also forms a linear bijection between $\cC^*_\bs$ and $\widehat{\cC}^*$.
\end{lemma}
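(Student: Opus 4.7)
The plan is to build $T$ explicitly and then check in turn that it is well-defined, that it lands in $\widehat{\mathcal C}$, that it is a linear bijection, and that it preserves finiteness of the limit at $r$. First, for $f\in\mathcal C_\bs$ I define $\widehat f$ on $\bs(I)$ by $\widehat f(\bs(x)):=f(x)$. This is unambiguous: if $x_1<x_2$ in $I$ satisfy $\bs(x_1)=\bs(x_2)$, then $\bs$ is constant on $[x_1,x_2]$, so by definition of $\mathcal C_\bs$ so is $f$. I then extend $\widehat f$ to $\overline{\bs(I)}\cap[0,\widehat r)$ by $\widehat f(\bs(x\pm)):=f(x\pm)$ for $x\in(0,r)$; this is consistent with the previous step, because the compatibility built into $\mathcal C_\bs$ forces $f(x)=f(x-)$ whenever $\bs(x-)=\bs(x)$ and $f(x)=f(x+)$ whenever $\bs(x+)=\bs(x)$, and also with existing values at points of $\bs(I)$: if $\bs(y)=\bs(x-)$ for some $y<x$, then $\bs$ is constant on $[y,x)$, hence so is $f$, giving $f(y)=f(x-)$. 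Finally, every open component of $[0,\widehat r)\setminus\widehat I$ is contained in some jump gap $(\bs(x-),\bs(x+))$ of $\bs$, and I extend $\widehat f$ onto it by linear interpolation between the already assigned endpoint values drawn from $\{f(x-),f(x),f(x+)\}$.

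I would next verify $\widehat f\in\widehat{\mathcal C}$. Linearity on the open components of $[0,\widehat r)\setminus\widehat I$ is by construction. For continuity on $[0,\widehat r)$, at a point $\bs(x)$ where $\bs$ is continuous, approaching inside $\widehat I$ reduces to taking $y\to x$ in $I$, and $\widehat f(\bs(y))=f(y)\to f(x)=\widehat f(\bs(x))$ by continuity of $f$ at $x$ (inherited from that of $\bs$). At a one-sided limit point $\bs(x\pm)$, the one-sided limit of $\widehat f$ along $\widehat I$ equals $f(x\pm)$, and the limit along the adjacent gap equals the endpoint value of the linear piece, which by construction is $f(x\pm)$. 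Linearity of $T$ is immediate from the defining formula. For injectivity, $Tf\equiv 0$ forces $f(x)=\widehat f(\bs(x))=0$ for every $x\in I$. For surjectivity, given $\widehat g\in\widehat{\mathcal C}$ I set $f(x):=\widehat g(\bs(x))$; continuity of $\widehat g$ yields $f(x\pm)=\widehat g(\bs(x\pm))$, from which both the $\bs$-continuity of $f$ and the identity $Tf=\widehat g$ on $\overline{\bs(I)}\cap[0,\widehat r)$ follow, and on each leftover open component both $Tf$ and $\widehat g$ are linear, so they agree everywhere.

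For the second statement, condition (2) of Definition~\ref{DEF31} ensures $\bs(x)\uparrow\widehat r$ as $x\uparrow r$, so $\lim_{x\uparrow r}f(x)$ exists and is finite if and only if $\lim_{\widehat x\uparrow\widehat r}\widehat f(\widehat x)$ does; hence $T$ restricts to a bijection $\mathcal C_\bs^*\to\widehat{\mathcal C}^*$. The only step that really requires care is bookkeeping the various discontinuity patterns of $\bs$ at a single point---one-sided versus two-sided jumps, and the possibility that $\bs(x)$ lies strictly between $\bs(x-)$ and $\bs(x+)$---but once these are separated, the compatibility axioms built into the definitions of $\mathcal C_\bs$ and $\widehat{\mathcal C}$ make every piece of the construction match up automatically.
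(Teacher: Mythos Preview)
Your proposal is correct and supplies precisely the routine verification that the paper omits: the paper states that the lemma is straightforward and leaves the proof to the reader, and your explicit construction with linear interpolation across the jump gaps of $\bs$, together with the case-by-case continuity check, is the natural (and essentially the only) way to fill it in.
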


The inverse map of $T$ is denoted by $T^{-1}$.
Now we are ready to define the second order derivative with respect to  $(\bs, \fm)$.

\begin{definition}\label{DEF34}
	A real-valued function $f$ on $(0,r)$ has a second order derivative $D_\fm D_\bs f=g$ (with respect to $(\bs,\fm)$) if $g\in L^1_\mathrm{loc}([0,r), \fm)$ and there exist two real numbers $a$ and $b$ such that
	\begin{equation}\label{eq:37}
		f(x)=a+b\bs(x)+\int_0^{x-} (\bs(x)-\bs(y))g(y)\fm(dy),\quad 0<x<r.
	\end{equation}
\end{definition}
\begin{remark}
	By this definition,  if $f$ has the second order derivative $g$, then $f\in \cC_\bs$ with $f(0)=f(0+)=a$.   For $x\in (0,r)$,
	\[
		f(x\pm)=a+b\bs(x\pm)+\int_0^{x\pm}(\bs(x\pm)-\bs(y))g(y)\fm(dy);
	\]
	see also \cite[Proposition~2.2]{S79}.  In addition,  if $\bs(x+\varepsilon)>\bs(x)$ (resp.  $\bs(x-\varepsilon)<\bs(x)$) for any $\varepsilon>0$,  then the right (resp.  left) $\bs$-derivative of $f$ exists and can be defined as
	\[
		D_\bs^\pm f(x):=\lim_{\varepsilon\downarrow 0}\frac{f(x\pm \varepsilon)-f(x)}{\bs(x\pm \varepsilon)-\bs(x)}=b+\int_0^{x\pm}g d\fm.
	\]
	Take a sequence $x_n\uparrow r$ with $\bs(x_n+\varepsilon)>\bs(x_n)$ (resp. $\bs(x_n-\varepsilon)<\bs(x_n)$) for any $\varepsilon>0$ and all $n$ large enough. (Such a sequence exists due to the second condition in Definition~\ref{DEF31}.) When $g\in L^1([0,r),\fm)$, we have
	\begin{equation}\label{eq:34}
		D_\bs f(r):=\lim_{n\rightarrow \infty}D^\pm_\bs f(x_n)=b+\int_0^{r-}gd\fm.
	\end{equation}
	Note that the limit \eqref{eq:34} does not depend on the choice of the sequence $x_n\uparrow r$. Taking an analogous sequence $x_n\downarrow 0$, one has $D_\bs f(0):=\lim_{n\rightarrow \infty}D^\pm_\bs f(x_n)=b$.

\end{remark}

Denote by $\widehat{\fm}$ the image measure of $\fm$ under the map $\bs: I\rightarrow \widehat{I}$.  Note that $\widehat{\fm}$ is a positive Radon measure on $\widehat{I}$ such that $\widehat{\fm}(\widehat{I}\setminus \bs(I))=0$.  The topological support $\text{supp}[\widehat{\fm}]$ of $\widehat{\fm}$ is not necessarily equal to $\widehat{I}$. For $g\in L^1_\mathrm{loc}([0,r),\fm)$,  we define a function $\tilde{g}\in L^1_\mathrm{loc}([0, \widehat{r}),\widehat{\fm})$ as follows: Set $\tilde{g}(\widehat{x}):=g(x)$ for $\widehat{x}=\bs(x)\in \bs(I)$ such that $\bs^{-1}(\widehat{x}):=\{y\in [0,r): \bs(y)=\widehat{x}\}$ is a singleton; if $\bs^{-1}(\widehat{x})$ is not a singleton and $\fm(\bs^{-1}(\widehat{x}))>0$,  set
		\[
			\tilde{g}(\widehat{x}):=\frac{\int_{\bs^{-1}(\widehat{x})}gd\fm}{\fm(\bs^{-1}(\widehat{x}))};
		\]
		for the remaining cases,  let $\tilde{g}(\widehat{x}):=0$.  It is easy to verify $\tilde{g}\in L^1_\mathrm{loc}([0, \widehat{r}),\widehat{\fm})$.  Particularly,  if $g\in \cC_\bs$,  then $\tilde{g}=Tg$,  $\fm$-a.e.  The following lemma is straightforward.

		\begin{lemma}\label{LM36}
			Let $g\in L^1_\mathrm{loc}([0,r),\fm)$ and $\tilde{g}$ be defined as above.  Then the second order derivative of $f\in \cC_\bs$ on $(0,r)$ is $g$,  i.e.  $D_\fm D_\bs f=g$,  if and only if $\widehat{f}=Tf\in \widehat{\cC}$ satisfies
			\begin{equation}\label{eq:31}
				\widehat{f}(\widehat{x})=\widehat a+ \widehat b\widehat{x}+\int_0^{\widehat{x}-} (\widehat{x}-y)\tilde{g}(y)\widehat{\fm}(dy),\quad \forall \widehat{x}\in (0,\widehat{r})
			\end{equation}
			for some real numbers $\widehat a$ and $\widehat b$.
			If $g\in \cC_\bs$,  then \eqref{eq:31} also holds with $Tg$ in place of $\tilde{g}$.
		\end{lemma}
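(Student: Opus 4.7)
The proof reduces to establishing the change-of-variables identity
\begin{equation*}
\int_0^{x-}(\bs(x)-\bs(y))g(y)\fm(dy)\;=\;\int_0^{\bs(x)-}(\bs(x)-\widehat y)\tilde g(\widehat y)\widehat\fm(d\widehat y)\tag{$\ast$}
\end{equation*}
for every $x\in(0,r)$. Once $(\ast)$ is available, combining it with the defining property $\widehat f(\bs(x))=f(x)$ of $\widehat f=Tf$ immediately gives the equivalence of \eqref{eq:37} (with constants $a,b$) and \eqref{eq:31} (with $\widehat a=a,\widehat b=b$) whenever $\widehat x$ lies in $\bs(I)$; the extension to all of $[0,\widehat r)$ then proceeds by a continuity plus affine-interpolation argument sketched below.

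To prove $(\ast)$, I would first establish the more general identity
\[
\int_I \phi(\bs(y))\,g(y)\,\fm(dy)\;=\;\int_{\widehat I}\phi(\widehat y)\,\tilde g(\widehat y)\,\widehat\fm(d\widehat y)
\]
for every bounded Borel $\phi$ on $\widehat I$. This is essentially the statement that $\tilde g$ is the Radon--Nikodym density of the image measure under $\bs$ of $g\,d\fm$ with respect to $\widehat\fm$, and the definition of $\tilde g$ is calibrated precisely for this: on the atomic part of $\widehat\fm$---corresponding to singleton fibres or flat intervals of $\bs$---the $\fm$-weighted averaging in the definition of $\tilde g$ matches the masses, while on the non-atomic part $\bs$ is essentially injective and the usual pushforward change of variables applies. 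Specialising the general identity to $\phi(\widehat y)=(\bs(x)-\widehat y)\mathbf 1_{\{\widehat y<\bs(x)\}}$ yields $(\ast)$, once one notes that the sets $[0,x)$ and $\{y:\bs(y)<\bs(x)\}$ differ only on a flat stretch of $\bs$ ending at $x$, on which the factor $\bs(x)-\bs(y)$ vanishes.

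For the extension beyond $\bs(I)$, both sides of \eqref{eq:31} are continuous in $\widehat x$ on $[0,\widehat r)$ (the right-hand side even at atoms of $\widehat\fm$, since the factor $\widehat x-\widehat y$ vanishes at $\widehat y=\widehat x$), so the identity extends from $\bs(I)$ to the closure $\overline{\bs(I)}$, which contains $\widehat I\setminus\{\widehat r\}$. On each connected component $(\widehat\alpha,\widehat\beta)$ of $[0,\widehat r)\setminus\widehat I$ the measure $\widehat\fm$ places no mass, so the right-hand side of \eqref{eq:31} is affine in $\widehat x$; the left-hand side $\widehat f$ is affine there by the very definition of $\widehat{\cC}$. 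Agreement at the endpoints $\widehat\alpha,\widehat\beta\in\widehat I$ then forces agreement throughout the gap. The final clause of the lemma follows from the a.e.\ identification $\tilde g=Tg$ for $g\in\cC_\bs$ noted just before the lemma. The main difficulty is really bookkeeping: singletons, flat intervals of $\bs$, jump points of $\bs$, and gaps of $\widehat I$ each contribute in slightly different ways, and the whole point of the definition of $\tilde g$ is to package these cases so that the Radon--Nikodym interpretation handles all of them uniformly.
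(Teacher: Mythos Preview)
The paper omits the proof entirely, calling the lemma ``straightforward.'' Your argument is correct and supplies exactly the details one would expect: the identity $(\ast)$ is the heart of the matter, and your observation that $\tilde g$ is by construction the density of the pushforward measure $\bs_*(g\,d\fm)$ with respect to $\widehat\fm$ (equivalently, that $\tilde g\circ\bs$ is the $\fm$-conditional expectation of $g$ given $\sigma(\bs)$) is precisely what the definition of $\tilde g$ is engineered to deliver. The handling of the discrepancy between $[0,x)$ and $\{y:\bs(y)<\bs(x)\}$ via the vanishing factor $\bs(x)-\bs(y)$ is clean, and the extension to all of $(0,\widehat r)$ by continuity on $\widehat I$ and affine interpolation across the gaps is the natural completion. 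There is nothing to compare against; your write-up is a faithful expansion of what the authors left implicit.
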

		\begin{remark}
			In the notation of \S\ref{SEC1},  \eqref{eq:31} can be expressed as $D_{\widehat{\fm}} D_{\widehat{x}} \widehat{f}=\tilde{g}$. This particularly implies $\widehat a=\widehat f(0)=f(0)$ and $\widehat b=D_{\widehat{x}} \widehat f(0)=D_\bs f(0)$.
		\end{remark}

		\subsection{Feller's boundary classification}\label{SEC32}

		As in the case of quasidiffusion discussed in \S\ref{SEC2},  we may classify $\widehat{r}$ (resp. $r$) with respect to $\widehat{\fm}$ (resp. $(\bs,\fm)$) in Feller's sense.  Specifically, for $\widehat{x}\in (0,\widehat{r})$, we define
		\[
		{\widehat{\sigma}}(\widehat{x}):=\int_0^{\widehat{x}} {\widehat{\fm}}\left((0, {\widehat{y}}] \right) d{\widehat{y}}, \quad
	{\widehat{\lambda}}({{\widehat{x}}}):=\int_0^{\widehat{x}+} {\widehat{y}} {\widehat{\fm}}(d{\widehat{y}}).
	\]
	By convention, we set ${\widehat{\sigma}}(\widehat{r}):=\lim_{{\widehat{x}}\rightarrow \widehat{r}}{\widehat{\sigma}}({\widehat{x}})$ and ${\widehat{\lambda}}(\widehat{r}):=\lim_{{\widehat{x}}\rightarrow \widehat{r}}{\widehat{\lambda}}({\widehat{x}})$.

	\begin{definition}
		With respect to $\widehat{\fm}$,  the endpoint $\widehat{r}$  is called
		\begin{itemize}
			\item[(1)] \emph{regular},  if ${\widehat{\sigma}}({\widehat{r}})<\infty,  {\widehat{\lambda}}({\widehat{r}})<\infty$;
			\item[(2)] \emph{exit},  if ${\widehat{\sigma}}({\widehat{r}})<\infty,  {\widehat{\lambda}}({\widehat{r}})=\infty$;
			\item[(3)] \emph{entrance},  if ${\widehat{\sigma}}({\widehat{r}})=\infty,  {\widehat{\lambda}}({\widehat{r}})<\infty$;
			\item[(4)] \emph{natural},  if ${\widehat{\sigma}}({\widehat{r}})= {\widehat{\lambda}}({\widehat{r}})=\infty$.
		\end{itemize}
		When $\widehat{r}$ is regular,  we call it \emph{reflecting} (resp.  absorbing) if $\widehat{r}\in \widehat{I}$ (resp.  $\widehat{r}\notin \widehat{I}$). Moreover, when $\widehat{r}$ is reflecting,  we call it \emph{instantaneously reflecting} if $\widehat{\fm}(\{\widehat{r}\})=0$.
	\end{definition}

	Based on this definition,  $r$ is called \emph{regular,  exit,  entrance,  natural,  (instan-\\taneously) reflecting} or \emph{absorbing} with respect to $(\bs,\fm)$ if so is $\widehat{r}$ with respect to $\widehat{\fm}$.

	\subsection{Homogeneous equations and reproducing kernel}\label{SEC33}

	Fix $\alpha>0$. According to Definition~\ref{DEF34}, we have the following homogeneous equation:
	\begin{equation}\label{eq:32}
		\frac{1}{2}D_\fm D_\bs f=\alpha f.
	\end{equation}
	A function $f$ is called a solution of \eqref{eq:32} if $f\in \cC_\bs$ and its second order derivative is $2\alpha f$.  A solution $f$ is \emph{positive} if $f(x)\geq 0$ for any $x\in I$, and is \emph{increasing} (resp. \emph{decreasing}) if $\widehat{f}:=Tf$ is an increasing (resp. decreasing) function on $[0,\widehat{r})$.

	Mimicking \S\ref{SEC22},  we will introduce three special solutions $u,  u^\pm$ of \eqref{eq:32}.  To do this, we set, for any $\widehat x\in [0,\widehat{r})$,
\[
	\widehat u^0(\widehat x):\equiv 1,\quad \widehat u^{n+1}(\widehat x):=\int_0^{\widehat x} \int_0^{\widehat y+} \widehat u^n(\widehat z)d\widehat \fm(\widehat z)d\widehat y,\quad \widehat u(\widehat x):=\sum_{n=0}^\infty (2\alpha)^n \widehat u^n(\widehat x)
\]
and
\[
	\widehat u^+(\widehat x):=\widehat u(\widehat x) \int_{\widehat x}^{\widehat r} \widehat u(\widehat y)^{-2}d\widehat y,\quad  \widehat u^-(\widehat x):=\widehat u(\widehat x) \int_0^{\widehat x} \widehat u(\widehat y)^{-2}d\widehat y.
\]
Then $\widehat u, \widehat u^\pm\in \widehat{\cC}$ are solutions of
\begin{equation}\label{eq:33}
	\frac{1}{2}D_{\widehat{\fm}}D_{\widehat{x}} \widehat{f}=\alpha \widehat{f}.
\end{equation}
Particularly,  $\widehat{u}, \widehat{u}^-$ are increasing while $\widehat{u}^+$ is decreasing. Furthermore, let
\begin{equation}\label{eq:35}
	u:=T^{-1} \widehat{u},\quad u^\pm:=T^{-1} \widehat{u}^\pm.
\end{equation}
According to Lemma~\ref{LM36}, $u$ and $u^-$ are two positive increasing solutions of \eqref{eq:32}, and $u^+$ is a positive decreasing solution of \eqref{eq:32}. The Wronskian of two solutions $\widehat{f}_1$ and $\widehat{f}_2$ of \eqref{eq:33} is denoted by $\widehat{W}(\widehat{f}_1,\widehat{f}_2)$.  For two solutions $f_1$ and $f_2$ of \eqref{eq:32},  their Wronskian is defined as $W(f_1,f_2):=\widehat{W}(Tf_1, Tf_2)$.

When $r$ is regular, both $u(r):=\lim_{x\uparrow r}u(x)$ and $D_\bs u(r)$ defined as \eqref{eq:34} are finite.  Let
\[
	\bar{\gamma}:=\left(\int_0^{\widehat{r}}\widehat{u}(\widehat{x})^2d\widehat{x}\right)^{-1},\quad \underline{\gamma}:=\bar{\gamma}/\left(1+\bar{\gamma}/u(r)D_\bs u(r)\right).
\]
In view of Lemmas~\ref{LM23} and \ref{LM36},  every positive and decreasing solution of \eqref{eq:32} must be of the form $u-\gamma \cdot u^-$ for some constant $\gamma\in [\underline{\gamma}, \bar{\gamma}]$ up to a multiplicative positive constant.  We take
\begin{equation}\label{eq:36}
	v:=u-\gamma\cdot u^-
\end{equation}
with the constant
\[
	\gamma=\left \lbrace
	\begin{aligned}
		 & \bar{\gamma},\qquad\qquad \qquad \qquad \qquad \qquad\qquad\qquad\;\;\;    r\text{ is absorbing},                        \\
		 & \bar{\gamma}-\frac{\bar{\gamma}^2}{\bar{\gamma}+D_\bs u(r)u(r)+2\alpha u(r)^2 \fm(\{r\})},\quad  r\text{ is reflecting}.
	\end{aligned}\right.
\]
When $r$ is not regular, $\bar\gamma=\underline{\gamma}$ and we single out $v:=u-\bar{\gamma} u^-=\bar\gamma u^+$.
The reproducing kernel with respect to $(\bs,\fm)$ is then derived as follows.

\begin{definition}\label{DEF38}
	For every $\alpha>0$,  let $u,  u^\pm$ be defined in \eqref{eq:35}.  Set $v:=\bar \gamma u^+$ when $r$ is not regular,  and otherwise take $v$ to be \eqref{eq:36}.  Then the family of functions $g_\alpha$,  $\alpha>0$,  on $I\times I$ defined as
	\[
		g_\alpha(x,y):=\left\lbrace
		\begin{aligned}
			 & W^{-1}u(x)v(y),\quad x\leq y, \\
			 & W^{-1}u(y)v(x),\quad y\leq x
		\end{aligned}
		\right.
	\]
	is called the \emph{reproducing kernel} with respect to $(\bs,\fm)$,  where $W:=W(u,v)$ is the Wronskian of $u$ and $v$.
\end{definition}

The purpose of the subsequent sections is to examine whether the family of maps:
\[
	R_\alpha: \mathcal{B}^+_b(I)\ni  f\mapsto R_\alpha f(x):=\int_I g_\alpha(x,y)f(y)\fm(dy),\quad \alpha>0
\]
is the resolvent of a certain Markov process,  where $\mathcal{B}^+_b(I)$ consists of all positive and bounded Borel measurable functions on $I$.

\section{Strictly increasing, continuous scale function}\label{SEC4}

In this section, we consider a pair $(\bs,\fm)$ in Definition~\ref{DEF31} with an additional condition that $\bs$ is strictly increasing and continuous.
In particular, $\bs$ is a homeomorphism between $I$ and $\widehat{I}$, and hence $\widehat{I}$ is an interval ended by $0$ and $\widehat{r}$. We denote the topological support of $\fm$ on $I$ and the topological support of $\widehat{\fm}$ on $\widehat{I}$ as $E:=\text{supp}[\fm]$ and $\widehat{E}:=\text{supp}[\widehat{\fm}]$ respectively.  Note that the restriction of $\bs$ to $E$,  still denoted by $\bs$,  is also a homeomorphism between $E$ and $\widehat{E}$.  Hence the reproducing kernel $g_\alpha$ defined in Definition~\ref{DEF38} becomes a homeomorphic image of the reproducing kernel of the quasidiffusion on $\widehat{E}$ with speed measure $\widehat{\fm}$.  In view of Theorems~\ref{THM26} and \ref{THM27},  we readily have the following.

\begin{theorem}
	Let $(\bs,\fm)$ be the pair in Definition~\ref{DEF31} such that $\bs$ is strictly increasing and continuous. The associated reproducing kernel $g_\alpha$,  $\alpha>0$,  is defined in Definition~\ref{DEF38}. Additionally,  let $\widehat{X}$ be the quasidiffusion on $\widehat{E}$ with speed measure $\widehat{\fm}$.  Define the image Markov process $X$ on $E$: $X_t:=\bs^{-1}(\widehat{X}_t)$ for any $t\geq 0$.  Then the resolvent of the transition function of $X$ is
	\[
		R_\alpha f(\cdot ):=\int_E g_\alpha(\cdot,y)f(y)\fm(dy),\quad \alpha>0,  f\in \mathcal{B}^+_b(E).
	\]
	Furthermore, the following properties hold:\
	\begin{itemize}
		\item[(1)] $X$ is an $\fm$-symmetric Markov process on $E$ whose associated Dirichlet form $(\sE,\sF)$ on $L^2(E,\fm)$ is regular and irreducible,  whose strongly continuous contraction resolvent on $L^2(E,\fm)$ is
			\[
				G_\alpha f(x)=\int_E g_\alpha(\cdot, y)f(y)\fm(dy),\quad \alpha>0,  f\in L^2(E,\fm),
			\]
			and whose generator $\sA$ with domain $\cD(\sA)$ on $L^2(E,\fm)$ is expressed as follows:
			\[
				\begin{aligned}
					\cD(\sA)=\{f\in \sF: & D_\fm D_\bs f\in L^2(E,\fm), D_\bs f(0)=0,                              \\
					                     & D_\bs f(r)=0\text{ whenever }r\text{ is  instantaneously reflecting}\},
				\end{aligned}\]
			and  for $f\in \cD(\sA)$,
			\[
				\sA f(x)=\left\lbrace
				\begin{aligned}
					 & \frac{1}{2}D_\fm D_\bs f(x),\quad     &  & x\in E\cap (0,r),                       \\
					 & -\frac{D_\bs f(r)}{2\fm(\{r\})},\quad &  & \text{if }x=r\text{ with }\fm(\{r\})>0.
				\end{aligned} \right.
			\]
		\item[(2)] Set $E^*:=E$ if $r$ is not entrance and $E^*:=E\cup \{r\}$ if $r$ is entrance.  Denote the zero extension of $\fm$ to $E^*$ still by $\fm$.  Then the transition function of $X$ acts on $C_\infty(E^*)$ as a Feller semigroup whose resolvent is
			\[
				R_\alpha f(\cdot)=\int_{E^*}g_\alpha(\cdot, y)f(y)\fm(dy),\quad \alpha>0, f\in C_\infty(E^*),
			\]
			and whose infinitesimal operator $\sL$ with domain $\cD(\sL)$ on $C_\infty(E^*)$ is expressed as follows:
			\[
				\begin{aligned}
					\cD(\sL)=\{ & f\in C_\infty(E^*):  D_\fm D_\bs f\in C_\infty(E^*),  D_\bs f(0)=0,    \\
					            & D_\bs f(r)=0\text{ whenever }r\text{ is instantaneously reflecting}\},
				\end{aligned}\]
			and for  $f\in \cD(\sL)$,
			\[
				\sL f(x)=\left\lbrace
				\begin{aligned}
					 & \frac{1}{2}D_\fm D_\bs f(x),\quad     &  & x\in E^*\cap [0,r),                      \\
					 & -\frac{D_\bs f(r)}{2\fm(\{r\})},\quad &  & \text{if }x=r \text{ with }\fm(\{r\})>0.
				\end{aligned} \right.
			\]
	\end{itemize}
\end{theorem}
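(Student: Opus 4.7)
The approach is to transfer everything from the quasidiffusion $\widehat X$ on $\widehat E$ (with speed measure $\widehat\fm$) back to $E$ via the homeomorphism $\bs$, using the fact that Theorems~\ref{THM26} and \ref{THM27} already give a complete description on the $\widehat E$-side. Under the present hypotheses, $\bs: I \to \widehat I$ is a homeomorphism and its restriction to $E$ is a homeomorphism onto $\widehat E$, with $\widehat\fm = \bs_*\fm$. The first observation to record is that the reproducing kernel $g_\alpha$ of Definition~\ref{DEF38} is exactly the $\bs$-pullback of the quasidiffusion reproducing kernel $\widehat g_\alpha$ on $\widehat E\times\widehat E$: because $u = T^{-1}\widehat u$, $u^\pm = T^{-1}\widehat u^\pm$, and by definition $W(u,v) = \widehat W(\widehat u,\widehat v)$, we have $g_\alpha(x,y) = \widehat g_\alpha(\bs(x),\bs(y))$. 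It is also immediate from \S\ref{SEC32} that the Feller classification of $r$ with respect to $(\bs,\fm)$ agrees with that of $\widehat r$ with respect to $\widehat\fm$, so cases match up in both parts.

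Once this identification is in place, the resolvent formula follows from a single change of variables: for $f\in \mathcal B_b^+(E)$,
\[
\int_E g_\alpha(x,y) f(y)\,\fm(dy) = \int_{\widehat E} \widehat g_\alpha(\bs(x),\widehat y)\,(f\circ \bs^{-1})(\widehat y)\,\widehat\fm(d\widehat y) = \widehat R_\alpha(f\circ \bs^{-1})(\bs(x)),
\]
and similarly for indicators, which identifies $R_\alpha$ as the resolvent of the image process $X_t := \bs^{-1}(\widehat X_t)$. Markov and symmetry properties for $X$ then follow directly from those of $\widehat X$.

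For part (1), I would introduce the unitary $U : L^2(E,\fm) \to L^2(\widehat E,\widehat\fm)$, $Uf := f\circ \bs^{-1}$, and transfer the Dirichlet form of $\widehat X$ — which is regular and irreducible by \cite{L23} — through $U$ to obtain $(\sE,\sF)$ on $L^2(E,\fm)$; regularity and irreducibility pass over since $U$ preserves both norm and pointwise order and maps $C_c(\widehat E)$ to $C_c(E)$. The generator description is then obtained by combining Theorem~\ref{THM26} applied to $\widehat X$ with Lemma~\ref{LM36}: the latter converts $D_{\widehat\fm}D_{\widehat x}\widehat f\in L^2(\widehat E,\widehat\fm)$ into $D_\fm D_\bs f\in L^2(E,\fm)$, and the boundary-derivative condition $D_{\widehat x}\widehat f(0)=0$, $D_{\widehat x}\widehat f(\widehat r)=0$ translates to $D_\bs f(0)=0$, $D_\bs f(r)=0$ via the remark following Definition~\ref{DEF34}. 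Part (2) is handled in exactly the same way: $U$ restricts to an isomorphism $C_\infty(\widehat E^*)\to C_\infty(E^*)$, so the Feller semigroup of $\widehat X$ on $C_\infty(\widehat E^*)$ transfers to a Feller semigroup of $X$ on $C_\infty(E^*)$ with the claimed resolvent and generator.

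The calculations are essentially bookkeeping; the only step that deserves some care is confirming that the boundary derivatives $D_\bs f$ at $0$ and $r$ agree with the corresponding $\widehat x$-derivatives of $\widehat f = Tf$, so that the condition $\fm(\{r\})>0$ on the $E$-side matches $\widehat\fm(\{\widehat r\})>0$ on the $\widehat E$-side and the generator formula involving $-D_\bs f(r)/(2\fm(\{r\}))$ transforms correctly. This is exactly the content of \eqref{eq:34} together with Lemma~\ref{LM36}, so no new analytic input is required beyond what has already been established.
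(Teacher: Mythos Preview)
Your proposal is correct and matches the paper's approach exactly: the paper itself does not write out a proof but simply states that the theorem follows ``in view of Theorems~\ref{THM26} and \ref{THM27}'' since $\bs$ is a homeomorphism, which is precisely the transfer-via-$\bs$ argument you outline. If anything, you have supplied more detail than the paper does (the unitary $U$, the explicit use of Lemma~\ref{LM36} for the generator translation), while the paper defers regularity, irreducibility, and the explicit Dirichlet form to external references in Remark~\ref{RM42}.
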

\begin{remark}\label{RM42}
	A pathwise approach to the process $X$ was established in \cite{L23b}. It particularly demonstrates that $X$ satisfies three important properties: the \emph{regular property},  the \emph{skip-free property}, and having \emph{no killing inside}.  Here the skip-free property means $(X_{t-} \wedge X_t,  X_{t-}\vee X_t)\cap E=\emptyset$ for any $t<\zeta$,  where $\zeta$ is the lifetime of $X$.  The explicit expression of the Dirichlet form $(\sE,\sF)$ has been formulated in, e.g., \cite[Theorem~5.3]{L23b}.  The irreducibility of $(\sE,\sF)$ has been proved in \cite[Theorem~3.1]{L23}.
\end{remark}

A typical example is the (symmetric) birth and death processes on $\bN$ (or on $\bN_\infty:=\bN\cup \{\infty\}$); see,  e.g., \cite[\S7]{L22}.  It should be noted that the left endpoint $0$ does not satisfy the first condition in Definition~\ref{DEF31} for birth and death processes.  However, this provides an opportunity to explain how to formulate the reproducing kernel for the case with a general left boundary (together with a general right boundary).

\begin{example}\label{EXA43}
	We are given a \emph{conservative} and \emph{totally stable} birth-death density matrix
	\begin{equation}\label{eq:38}
		Q=(q_{ij})_{i,j\in \bN}:=\left(\begin{array}{ccccc}
				-q_0   & b_0    & 0      & 0      & \cdots \\
				a_1    & -q_1   & b_1    & 0      & \cdots \\
				0      & a_2    & -q_2   & b_2    & \cdots \\
				\cdots & \cdots & \cdots & \cdots & \cdots
			\end{array}  \right),
	\end{equation}
	where $a_k>0$,  $k \geq 1$,  $b_k>0$,  $k\geq 0$ and $q_k=a_k+b_k$ (Set $a_0=0$ for convenience).  A continuous time Markov chain is called a \emph{birth-death $Q$-process} ($Q$-process for short) if its transition matrix $(p_{ij}(t))_{i,j\in \bN}$ is \emph{standard} and its \emph{density matrix} is $Q$,  i.e.  $p'_{ij}(0)=q_{ij}$ for $i,j\in \bN$.  It is called \emph{symmetric} if $\mu_i p_{ij}(t)=\mu_j p_{ji}(t)$ for any $i,j\in \bN$ and $t\geq 0$,  where
	\[
		\mu_0=1,  \quad \mu_k=\frac{b_0b_1\cdots b_{k-1}}{a_1a_2\cdots a_k}, \; k\geq 1.
	\]
	Actually $\mu=\{\mu_k: k\in \bN\}$ is the unique (possible) symmetric measure of a $Q$-process up to a multiplicative constant; see, e.g., \cite[Lemma~6.6]{C04}.

	The pair $(\bs,\fm)$ for a symmetric $Q$-process can be taken as follows: $I:=[0,\infty\rangle$,  $\bs$ is a strictly increasing and continuous function on $I$ such that
	\begin{equation}\label{eq:39}
		\bs(0)=0,\quad \bs(1)=\frac{1}{2b_0},\quad \bs(k)=\frac{1}{2b_0}+\sum_{i=2}^k\frac{a_1a_2\cdots a_{i-1}}{2b_0b_1\cdots b_{i-1}},\; k\geq 2
	\end{equation}
	and $\fm=\mu$. (The candidates of $\bs$ are not unique.)

	In view of \cite[\S7]{L22},  if $\infty$ is not regular with respect to $(\bs,\fm)$,  then the symmetric $Q$-process is unique and identified with the \emph{minimal $Q$-process}.  If $\infty$ is regular,  all symmetric $Q$-processes are parameterized by a constant $\kappa\in [0,\infty]$. The case $\kappa=0$ (resp. $\kappa=\infty$) corresponds to the so-called $(Q,1)$\emph{-process} (resp. the minimal $Q$-process) with the instantaneously reflecting (resp. absorbing) boundary behaviour at $\infty$. For $\kappa\in (0,\infty)$,  the corresponding $Q$-process admits both reflecting and killing at $\infty$.  (It appears a Robin boundary condition at $\infty$.)    Since the current paper does not concern killing inside,  only  the minimal $Q$-process and the $(Q,1)$-process will be examined in what follows.

	To formulate the reproducing kernels of $Q$-processes, we replace the underlying point $0$ with another point $e:=1/2$ in the integrations involved. Specifically, set, for $\widehat{x}\in (0,\widehat{r})$ with $\widehat{r}=\lim_{k\rightarrow \infty}\bs(k)$,
	\[
		\widehat u^0(\widehat x):\equiv 1,\quad \widehat u^{n+1}(\widehat x):=\int_{\widehat{e}}^{\widehat x} \int_{\widehat{e}}^{\widehat y+} \widehat u^n(\widehat z)d\widehat \fm(\widehat z)d\widehat y,\quad \widehat u(\widehat x):=\sum_{n=0}^\infty (2\alpha)^n \widehat u^n(\widehat x),
	\]
	where $\widehat{e}:=\bs(e)$ and $\int_{[\widehat{e},\widehat{x}]}:=-\int_{(\widehat{x},\widehat{e}]}, \int_{[\widehat{e},\widehat{x})}:=-\int_{[\widehat{x},\widehat{e}]}$ for $\widehat{x}<\widehat{e}$. Additionally, let
	\[
		\widehat u^+(\widehat x):=\widehat u(\widehat x) \int_{\widehat x}^{\widehat r} \widehat u(\widehat y)^{-2}d\widehat y,\quad  \widehat u^-(\widehat x):=\widehat u(\widehat x) \int_0^{\widehat x} \widehat u(\widehat y)^{-2}d\widehat y
	\]
	and
	\[
		u:=T^{-1} \widehat{u},\quad u^\pm:=T^{-1} \widehat{u}^\pm.
	\]
	Note that $u^\pm$ are strictly monotone while $u$ is constant on $(0,1)$.  We can obtain the positive decreasing solution $v$ of equation \eqref{eq:32} using a similar approach as in Definition~\ref{DEF38}. Specifically, we define
	\[
		v:=u-\gamma u^-,
	\]
	where $\gamma=\bar\gamma:=\left(\int_0^{\widehat r}\widehat u(y)^{-2}dy\right)^{-1}$ (resp. $\gamma=\underline{\gamma}:=\lim_{\widehat{x}\uparrow \widehat{r}}(D^+_{\widehat x} \widehat{u}(\widehat{x})/D_{\widehat{x}}^+\widehat{u}^-(\widehat{x}))$) for the minimal $Q$-process (resp. the $(Q,1)$-process). However, the positive increasing solution in the representation of the reproducing kernel differs from that in Definition~\ref{DEF38}. Instead, we require a positive increasing solution $\tilde{u}$ such that (see, e.g.,  \cite[(5.27)]{F14})
	\[
		\lim_{x\downarrow 0}\left(-D_\bs^+ \tilde{u}(x)+2\alpha \mu_0\tilde{u}(x)\right)=0.
	\]
	By mimicking the argument of Lemma~\ref{LM23},  one easily gets $\tilde{u}=u+2\alpha \mu_0 u^-=u+2\alpha u^-$ (up to a multiplicative constant).  Eventually the reproducing kernel $g_\alpha$,  $\alpha>0$,  with respect to $(\bs,\fm)$ is given by:
	\begin{align*}
		\quad&g_\alpha(k_1, k_2) \\
		& =W^{-1}\tilde{u}(k_1)v(k_2) \\
		                   & = \left\lbrace
		\begin{aligned}
			 & \frac{\left(u(k_1)+2\alpha u^-(k_1)\right)\left(u(k_2)-\underline{\gamma} u^-(k_2)\right)}{2\alpha+\underline{\gamma}} &  & \text{ for }(Q,1)\text{-process},    \\
			 & \frac{\bar{\gamma} \left(u(k_1)+2\alpha u^-(k_1)\right)u^+(k_2)}{2\alpha+\bar{\gamma}}                                 &  & \text{ for minimal }Q\text{-process}
		\end{aligned}
		\right.
	\end{align*}
	for $k_1\leq k_2$ with $k_1,k_2\in \bN$,  where $W:=W(\tilde{u}, v)$ is the Wronskian of $\tilde{u}$ and $v$. Note that  $g_\alpha(k_2,k_1):=g_\alpha(k_1,k_2)$.

	An alternative treatment of the resolvent representation for $Q$-processes was appeared in \cite{F59}; see also \cite{WY92}.
	Symmetric $Q$-processes are Feller processes on $\bN$ when $\infty$ is exit,  natural or absorbing,  and on $\bN_\infty$ when $\infty$ is reflecting or entrance.  For more considerations on,  e.g.,  non-symmetric $Q$-processes,  we refer the readers to \cite{L22}.
\end{example}

\section{Strictly increasing, discontinuous scale function with fully supported speed measure}\label{SEC5}

The next two sections are devoted to the case that $\bs$ is strictly increasing.
To handle this situation effectively,  it is crucial to introduce two ``inverse" maps of $\bs: I\rightarrow \widehat{I}$.  The first one,  denoted by $\mathbf{r}: \widehat{I}\rightarrow I$,  is a surjective map that merges the gaps of $\widehat{I}$.  Note that
\begin{align*}
	\widehat{I}=\{\bs(x):x\in I\}&\cup\{\bs(x-):\bs(x-)\neq \bs(x), x\in I\}\\
	&\cup \{\bs(x+): \bs(x+)\neq \bs(x), x\in I\}.
\end{align*}
Then $\br$ is defined as $\br(\widehat{x}):=x$ for either $\widehat{x}=\bs(x)$ or $\widehat{x}=\bs(x\pm)$ with $\bs(x\pm)\neq \bs(x)$.  The second is a homeomorphism from $\widehat{I}$ to an \emph{enlarged} space of $I$.  We make the completion of $I$ with respect to the metric
\[
	\rho(x,y):=|\tan^{-1}x-\tan^{-1}y|+|\tan^{-1}\bs(x)-\tan^{-1}\bs(y)|,
\]
but exclude $r$ from this completion if $r\notin I$.  The resulting enlarged space,  denoted by $I^*$,  is obtained heuristically by splitting each discontinuous point of $\bs$ into two or three.  Then $\bs$ can be extended to a homeomorphism $\bs^*: I^*\rightarrow \widehat{I}$ and its inverse $\br^*:\widehat I \rightarrow I^*$ is actually the second wanted ``inverse" of $\bs$.

In this section, we also assume the following condition:
\begin{equation}\label{eq:51-3}
	\text{$\fm$ is fully supported on $I$, and $\fm(\{x\})>0$ whenever $\bs(x\pm)\neq \bs(x)$.}
\end{equation}
This particularly implies that $\widehat{\fm}$ is a fully supported Radon measure on $\widehat{I}$.
Let $\fm^*$ denote the zero extension of $\fm$ to $I^*$.  Clearly,  $\fm^*$ is identified with the image measure of $\widehat{\fm}$ under the homeomorphism $\br^*:\widehat{I}\rightarrow I^*$.

The quasidiffusion on $\widehat{I}$ with speed measure $\widehat{\fm}$ and lifetime $\widehat{\zeta}$ can be denoted as
\[
	\widehat X=\left\{\widehat{\Omega},\widehat{\sF}, \widehat{\sF}_t, \widehat{X}_t, (\widehat{\mathbf{P}}_{\widehat{x}})_{\widehat{x}\in \widehat{I}}\right\},
\]
where $(\widehat{\Omega}, \widehat{\sF})$ is the sample space (of c\`adl\`ag paths),  $\widehat{\sF}_t$ is the filtration,  $\widehat{\mathbf{P}}_{\widehat{x}}$ is the probability measure on $(\widehat{\Omega},\widehat{\sF})$ with $\widehat{\mathbf{P}}_{\widehat{x}}(\widehat{X}_0=\widehat{x})=1$. In what follows,  we will raise two image processes of $\widehat{X}$ with the help of $\br$ and $\br^*$ respectively.
The first one is to define $ \Omega:=\left\{\omega\in \widehat{\Omega}: \widehat{X}_0(\omega)\in \bs(I)\right\}\in \widehat\sF_0, \sF:=\widehat{\sF}\cap \Omega=\{A\cap  \Omega: A\in \widehat{\sF}\}$ and
\begin{equation}\label{eq:53}
	\begin{aligned}
		 & \sF_t:=\widehat{\sF}_t\cap \Omega=\{A\cap  \Omega: A\in \widehat{\sF}_t\},\quad  X_t(\omega):=\br(\widehat{X}_t(\omega)), \; \omega\in \Omega, \\
		 & {\mathbf{P}}_x:=\widehat{\mathbf{P}}_{\bs(x)}|_{ \Omega},\; x\in I, \quad  \zeta(\omega):=\widehat{\zeta}(\omega),\; \omega\in \Omega.
	\end{aligned}
\end{equation}
Clearly, $(\sF_t)_{t\geq 0}$ is a right continuous filtration on $(\Omega, \sF)$,  and $( X_t)_{t\geq 0}$ is a family of random variables on $ \Omega$ adapted to $(\sF_t)_{t\geq 0}$.  The second image process is much simpler since $\br^*$ is a homeomorphism.
It is a standard process on $I^*$ and denoted by $X^*:=(X^*_t)_{t\geq 0}$ for simplification,  where $X^*_t:=\br^*(\widehat{X}_t)$.

\begin{theorem}\label{THM51}
	Let $(\bs,\fm)$ be a pair in Definition~\ref{DEF31} such that $\bs$ is strictly increasing, discontinuous and that \eqref{eq:51-3} holds.  Furthermore, let $g_\alpha$, $\alpha>0$,  be the reproducing kernel with respect to $(\bs,\fm)$ as in Definition~\ref{DEF38}.
	\begin{itemize}
		\item[(1)] The stochastic process
			\[
				X=\{\Omega, \sF, \sF_t, X_t, ({\mathbf{P}}_x)_{x\in I}\}
			\]
			with lifetime $\zeta$ is an $\fm$-symmetric continuous Markov process on $I$,  which fails to have the strong Markov property.  Furthermore,  the resolvent of its transition function is
			\begin{equation}\label{eq:51}
				R_\alpha f(\cdot):=\int_I g_\alpha(\cdot,  y)f(y)\fm(dy),\quad \alpha>0, f\in \mathcal{B}^+_b(I).
			\end{equation}
		\item[(2)] Let $E^*:=I^*$ if $r$ is not entrance and $E^*:=I^*\cup \{r\}$ if $r$ is entrance.  The zero extension of $\fm^*$ to $E^*$ is still denoted by $\fm^*$.  Then $g_\alpha$ can be continuously extended to a function $g_\alpha^*$ on $I^*\times I^*$,  and the transition function of $X^*$ acts on $C_\infty(E^*)$ as a Feller semigroup,  whose resolvent is
			\begin{equation}\label{eq:52}
				R^*_\alpha f^*(\cdot)=\int_{E^*} g^*_\alpha(\cdot,  y)f^*(y)\fm^*(dy),\quad \alpha>0,  f^*\in C_\infty(E^*).
			\end{equation}
			Particularly,  $X^*$ is the canonical Ray-Knight compactification of $X$.
	\end{itemize}
\end{theorem}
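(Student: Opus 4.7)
My plan is to transfer both statements from the quasidiffusion $\widehat X$ on $\widehat I$, to which Theorems~\ref{THM26} and \ref{THM27} apply, down to $I$ via the surjection $\br$ and across to $I^*$ via the homeomorphism $\br^*$. For part~(1), the resolvent formula \eqref{eq:51} is obtained by a direct change of variables: from the definitions of $X$ and $\mathbf{P}_x$ together with Theorem~\ref{THM27},
\[
\mathbf{E}_x\!\left[\int_0^\zeta e^{-\alpha t} f(X_t)\,dt\right] = \int_{\widehat I}\widehat g_\alpha(\bs(x),\widehat y)\,f(\br(\widehat y))\,\widehat\fm(d\widehat y).
\]
Since $\widehat\fm$ is the $\bs$-image of $\fm$ and is carried by $\bs(I)$, the push-forward back to $I$, together with $\widehat g_\alpha(\bs(x),\bs(z))=g_\alpha(x,z)$ from \eqref{eq:35}, yields \eqref{eq:51}. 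The $\fm$-symmetry of the resulting transition function is encoded in the symmetry $g_\alpha(x,y)=g_\alpha(y,x)$. Continuity of $X$ follows from the fact that every jump of $\widehat X$ is a crossing of an open component of $[0,\widehat r)\setminus\widehat I$, and the two endpoints of such a component both belong to one of the triples $\{\bs(y-),\bs(y),\bs(y+)\}$ that $\br$ collapses to a single point; hence $X_{t-}=X_t$ at every jump time of $\widehat X$.

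For the (simple) Markov property, I use that the transition kernel $\widehat P_t(\widehat x,\cdot)$ of the quasidiffusion is absolutely continuous with respect to $\widehat\fm$ (deducible from Theorem~\ref{THM27} via Laplace inversion, and standard for quasidiffusions). Together with $\widehat\fm(\widehat I\setminus\bs(I))=0$ this forces $\widehat X_t\in\bs(I)$ and hence $\widehat X_t=\bs(X_t)$ almost surely at each fixed $t>0$, so the Markov property of $\widehat X$ transports:
\[
\mathbf{E}_x[f(X_{t+s})\mid\sF_t] = \widehat P_s(f\circ\br)(\widehat X_t) = \widehat P_s(f\circ\br)(\bs(X_t)) = P_sf(X_t).
\]
To exhibit the failure of the strong Markov property, I would take $T=\inf\{t>0:X_t=y\}$ at a point $y$ with $\bs(y-)\neq\bs(y)\neq\bs(y+)$: with positive probability $\widehat X_T$ lands at $\bs(y-)$ or $\bs(y+)$ rather than $\bs(y)$, and the laws of $\widehat X_{T+\cdot}$ from different members of this triple are manifestly distinct, since the linearity of $\widehat u$ and $\widehat v$ across the intervening gap renders the $\widehat g_\alpha$-values at these three points unequal.

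Part~(2) is much smoother because $\bs^*$ is a homeomorphism. Setting $g_\alpha^*(x^*,y^*):=\widehat g_\alpha(\bs^*(x^*),\bs^*(y^*))$ defines a continuous kernel on $I^*\times I^*$ that agrees with $g_\alpha$ on $I\times I$. The image process $X^*=\br^*\circ\widehat X$ is topologically conjugate to $\widehat X$ via $\br^*$, so it inherits the Feller property on $C_\infty(E^*)$ directly from Theorem~\ref{THM27}, and the same change of variables as in part~(1), with $\fm^*$ in place of $\fm$, yields \eqref{eq:52}. The identification of $X^*$ with the canonical Ray--Knight compactification of $X$ follows from the characterization recalled in \cite[\S4.3]{L23} (see also \cite[\S17]{S88}): this compactification is determined as the (essentially unique) completion on which the resolvent kernel extends continuously, and $(I^*,g_\alpha^*)$ is exactly such an extension of $(I,g_\alpha)$.

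The principal obstacle throughout is the delicate distinction between $\widehat X_t\in\widehat I$ and its $\br$-collapse $X_t\in I$. The identity $\widehat X_t=\bs(X_t)$ is valid $\mathbf{P}_x$-a.s.\ only at each fixed $t>0$; its failure at random stopping times hitting a discontinuity of $\bs$ is precisely what breaks the strong Markov property of $X$, and is the reason why the splitting compactification $I^*$ is needed in order to recover a Feller process $X^*$.
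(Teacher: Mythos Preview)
Your approach is correct and coincides with the paper's: both derive \eqref{eq:51} and \eqref{eq:52} by the change of variables $\widehat g_\alpha(\bs(\cdot),\bs(\cdot))=g_\alpha(\cdot,\cdot)$ from the quasidiffusion resolvent in Theorem~\ref{THM27}, using $\widehat\fm(\widehat I\setminus\bs(I))=0$, and then transport the Feller structure to $I^*$ via the homeomorphism $\br^*$. The paper in fact proves only these two resolvent identities and defers everything else (the simple Markov property, continuity of paths, failure of the strong Markov property, and the Ray--Knight identification) to \cite[Theorem~4.5 and \S4.3]{L23}; your additional sketches for these deferred points are sound in outline and go beyond what the paper itself argues.
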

\begin{proof}
	We only prove \eqref{eq:51} and \eqref{eq:52}.  The remainder parts of this theorem have already been established in \cite[Theorem~4.5 and \S4.3]{L23}.  For the terminology of (canonical) Ray-Knight compactification, we refer the readers to \cite[\S4.3]{L23}.

	Let $\widehat{E}:=\widehat{I}$ if $\widehat{r}$ is not entrance, otherwise let $\widehat{E}:=\widehat{I}\cup \{\widehat{r}\}$.  Set $\widehat{g}_\alpha(\widehat{x},\widehat{y}):=\widehat{W}^{-1}\widehat{u}(\widehat{x})\widehat{v}(\widehat{y})$ for $\widehat{x},\widehat{y}\in \widehat{I}$ with $\widehat{x}\leq \widehat{y}$ and $\widehat{g}_\alpha(\widehat{y},\widehat{x}):=\widehat{g}_\alpha(\widehat{x},\widehat{y})$,  where $\widehat{u}, \widehat{v}$ are defined in \S\ref{SEC33} and $\widehat{W}:=\widehat{W}(\widehat{u},\widehat{v})$ represents the Wronskian of $\widehat{u}$ and $\widehat{v}$.
	Based on Theorem~\ref{THM27} and $\widehat{\fm}(\widehat{I}\setminus \bs(I))=0$,  the resolvent of the transition function of $\widehat{X}$ is
	\begin{equation}\label{eq:54}
		\widehat{R}_\alpha \widehat{f}(\cdot)=\int_{\widehat{I}}\widehat{g}_\alpha(\cdot, \widehat{y})\widehat{f}(\widehat{y})\widehat{\fm}(d\widehat{y})=\int_{\bs(I)}\widehat{g}_\alpha(\cdot, \widehat{y})\widehat{f}(\widehat{y})\widehat{\fm}(d\widehat{y}),\quad \widehat{f}\in \mathcal{B}^+_b(\widehat{I}).
	\end{equation}
	This,  together with the definition \eqref{eq:53} of $X$,  implies that for $f\in \mathcal{B}^+_b(I)$ and $x\in I$, we have
	\[
		\begin{aligned}
			R_\alpha f(x) & =\int_0^\infty e^{-\alpha t} \mathbf{E}_x f(X_t)dt=\int_0^\infty e^{-\alpha t} \mathbf{\widehat E}_{\bs(x)} f(\br({\widehat X}_t))dt              \\
			              & =\widehat{R}_\alpha (f\circ \br)(\bs(x))=\int_{\bs(I)}\widehat{g}_\alpha(\bs(x), \widehat{y})(f\circ \br)(\widehat{y})\widehat{\fm}(d\widehat{y}) \\
			              & =\int_I g_\alpha(x,y)f(y)\fm(dy).
		\end{aligned}\]
	Hence we arrive at \eqref{eq:51}. Furthermore,  the existence of $g^*_\alpha$ is a consequence of the $\bs$-continuity of $u$ and $v$.  Particularly,  $\widehat{g}_\alpha(\cdot,\cdot)=g^*_\alpha(\br^*(\cdot),\br^*(\cdot))$.  By using the fact that $\br^*: \widehat{I}\rightarrow I^*$ is a homeomorphism, together with equation \eqref{eq:54}, one can easily obtain \eqref{eq:52}.  This completes the proof.
\end{proof}

The example below formulates the explicit expression of the reproducing kernel for a simple discontinuous scale function.


\begin{example}\label{EXA52}
	Let $I=\bR$, $\fm$ be the Lebesgue measure and
	\begin{align*}
		\bs(x)=x,~x<0,\quad \bs(x)=x+2/\kappa,~x\geq0,
	\end{align*}
	where $\kappa>0$ is a given constant. The non-strong-Markov process $X$ in Theorem~\ref{THM51} for this pair has been studied in \cite[\S5.1]{L23}. Its canonical Ray-Knight compactification $X^*$ on $E^*=I^*=(-\infty,0-]\cup [0,\infty)$ is called the \emph{snapping out Brownian motion} with parameter $\kappa$; see, e.g, \cite{L17} and \cite{LS20}. Here $0-$ should be treated as another origin point distinct from $0$.

	Let us compute the reproducing kernel $g_\alpha$ for $\alpha>0$. The three particular solutions $u, u^\pm$ of \eqref{eq:32} arise in a similar argument to that in Example~\ref{EXA43}, while the underlying point for related integrations is replaced by $e=0$ and the left endpoint is $l=-\infty$. The expression for $u$ is actually the same as the analogue for Brownian motion. Specifically,
	\[
		u(x)=\left(e^{\sqrt{2\alpha}x}+e^{-\sqrt{2\alpha}x}\right)/2.
	\]
	Then, it is straightforward to compute that
	\begin{align*}
		u^+(x)
		= \left\lbrace
		\begin{aligned}
			 & \frac{e^{-\sqrt{2\alpha}x}}{\sqrt{2\alpha}}+\frac{e^{\sqrt{2\alpha}x}+e^{-\sqrt{2\alpha}x}}{\kappa}, &  & x<0,    \\
			 & \frac{e^{-\sqrt{2\alpha}x}}{\sqrt{2\alpha}},                                                         &  & x\geq 0
		\end{aligned}
		\right.
	\end{align*}
	and
	\begin{align*}
		u^-(x)
		= \left\lbrace
		\begin{aligned}
			 & \frac{e^{\sqrt{2\alpha}x}}{\sqrt{2\alpha}},                                                         &  & x<0,     \\
			 & \frac{e^{\sqrt{2\alpha}x}}{\sqrt{2\alpha}}+\frac{e^{\sqrt{2\alpha}x}+e^{-\sqrt{2\alpha}x}}{\kappa}, &  & x\geq 0.
		\end{aligned}
		\right.
	\end{align*}
	Note that both $-\infty$ and $\infty$
	are natural boundary.
	Hence the reproducing kernel is
	\[
		g_\alpha(x,y)=\left\lbrace
		\begin{aligned}
			 & \left(\frac{2}{\sqrt{2\alpha}}+\frac{2}{\kappa}\right)^{-1}u^-(x)u^+(y),\quad x\leq y, \\
			 & \left(\frac{2}{\sqrt{2\alpha}}+\frac{2}{\kappa}\right)^{-1}u^-(y)u^+(x),\quad y\leq x.
		\end{aligned}
		\right.
	\]
	It is important to note that although $g_\alpha$ is not continuous on $\bR\times \bR$, it can be extended to a continuous function $g^*_\alpha$ on $I^*\times I^*$.
\end{example}

\section{Strictly increasing, discontinuous scale function with general speed measure}\label{SEC6}

In this section, we will remove the assumption \eqref{eq:51-3} and instead consider a strictly increasing, discontinuous scale function with a general speed measure.
Note that $I^*$,  $\br: \widehat{I}\rightarrow I$,  $\bs^*: I^*\rightarrow \widehat{I}$ and $\br^*: \widehat{I}\rightarrow I^*$ are still well defined.
Denote the topological support of $\fm$ on $I$ and the topological support of $\widehat{\fm}$ on $\widehat{I}$ by $F:=\text{supp}[\fm]\subset I$ and $\widehat{F}:=\text{supp}[\widehat{\fm}]\subset \widehat{I}$ respectively. Additionally, let  $F^*:=\br^*(\widehat{F})$,  which represents the topological support of $\fm^*=\widehat{\fm}\circ \bs^*$ on $I^*$.

\begin{lemma}\label{LM61}
	Assume that $\bs$ is strictly increasing.  Then $F=\br(\widehat{F})$.
\end{lemma}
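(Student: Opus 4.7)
The plan is to prove the two inclusions $\br(\widehat{F})\subset F$ and $F\subset \br(\widehat{F})$ separately. Two basic ingredients will do all the work. First, because $\bs$ is injective, the push-forward identity $\widehat{\fm}(V)=\fm(\bs^{-1}(V))$ holds for Borel $V\subset \widehat{I}$, and equivalently $\fm(A)=\widehat{\fm}(\bs(A))$ for Borel $A\subset I$. Second, strict monotonicity of $\bs$ forces each ``gap'' $(\bs(x-),\bs(x))$ or $(\bs(x),\bs(x+))$ to contain no other points of $\widehat{I}$ than its endpoints lying in $\bs(I)$, so that a small neighbourhood in $\widehat{I}$ of $\bs(x-)$, $\bs(x)$, or $\bs(x+)$ pulls back under $\bs^{-1}$ to a one-sided or two-sided neighbourhood of $x$ in $I$.

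For $\br(\widehat{F})\subset F$, I take $\widehat{y}\in \widehat{F}$, set $x:=\br(\widehat{y})$, and aim to show that every neighbourhood $U$ of $x$ in $I$, say $U=(x-\delta,x+\delta)\cap I$, has positive $\fm$-measure. I produce a basic neighbourhood $V$ of $\widehat{y}$ in $\widehat{I}$ with $\bs^{-1}(V)\subset U$ case by case: if $\widehat{y}=\bs(x)$, take $V=(\bs(x-\delta),\bs(x+\delta))\cap \widehat{I}$; if $\widehat{y}=\bs(x-)\neq \bs(x)$, take $V=(\bs(x-\delta),\bs(x))\cap \widehat{I}$, which contains $\widehat{y}$ because by strictness $\bs(x-\delta)<\bs(x-\delta/2)\leq \bs(x-)$; symmetrically if $\widehat{y}=\bs(x+)\neq \bs(x)$. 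In each case strict monotonicity of $\bs$ gives $\bs^{-1}(V)\subset (x-\delta,x+\delta)\cap I=U$, whence $\fm(U)\geq \fm(\bs^{-1}(V))=\widehat{\fm}(V)>0$, so $x\in F$.

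For the reverse inclusion $F\subset \br(\widehat{F})$, I take $x\in F$ and split into cases on how $\fm$ accumulates at $x$. If $\fm(\{x\})>0$, then $\widehat{\fm}(\{\bs(x)\})=\fm(\{x\})>0$, giving $\bs(x)\in \widehat{F}$ and $\br(\bs(x))=x$. Otherwise, $\fm((x-\delta,x)\cap I)+\fm((x,x+\delta)\cap I)>0$ for every $\delta>0$, and monotonicity in $\delta$ forces one of the two sides, say the right, to satisfy $\fm((x,x+\delta))>0$ for \emph{all} $\delta>0$. I then claim $\bs(x+)\in \widehat{F}$: any basic neighbourhood $V=(\bs(x+)-\eta_1,\bs(x+)+\eta_2)\cap \widehat{I}$ of $\bs(x+)$ can be met by choosing $\delta$ small enough that $\bs(x+\delta)<\bs(x+)+\eta_2$ (using $\bs(x+\delta)\downarrow \bs(x+)$); then $\bs((x,x+\delta))\subset [\bs(x+),\bs(x+\delta))\subset V$, so $\widehat{\fm}(V)\geq \widehat{\fm}(\bs((x,x+\delta)))=\fm((x,x+\delta))>0$. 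Since $\br(\bs(x+))=x$, we obtain $x\in \br(\widehat{F})$. The left-accumulation case is handled symmetrically via $\bs(x-)$.

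The only real obstacle is bookkeeping at discontinuities of $\bs$: one has to verify that the candidate neighbourhoods $V\subset \widehat{I}$ really do contain $\bs(x\pm)$ and really do pull back into the intended one-sided piece of $U$. Both are immediate consequences of the gap observation in the first paragraph, and without strict monotonicity of $\bs$ the map $\br$ would not even be single-valued on fibres, so this hypothesis is essential for the statement to make sense.
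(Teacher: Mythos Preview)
Your proof is correct and rests on the same two ingredients the paper uses: the push-forward identity $\widehat{\fm}=\fm\circ\bs^{-1}$ and the fact that strict monotonicity makes the open jump intervals $(\bs(x-),\bs(x))$ and $(\bs(x),\bs(x+))$ disjoint from $\widehat{I}$. The difference is purely organizational: the paper proves both inclusions by contrapositive (showing $x\notin F\Rightarrow x\notin\br(\widehat{F})$ and $x\notin\br(\widehat{F})\Rightarrow x\notin F$), which lets it handle each direction in a single stroke without your case split on $\fm(\{x\})$ and on which side of $x$ the mass accumulates. Your direct argument is a little longer but equally valid; the paper's contrapositive just packages the same neighbourhood manipulations more economically.
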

\begin{proof}
	We first show that $x\notin \br(\widehat{F})$ for $x\in I\setminus F$.  Note that if $x\in I\setminus F$, then there exists a constant $\varepsilon>0$ such that $\fm((x-\varepsilon,x+\varepsilon))=0$.   The definition of $\widehat{\fm}$ yields
	\[
		\widehat{\fm}\left((\bs(x-\varepsilon), \bs(x+\varepsilon)\right)=\fm((x-\varepsilon,x+\varepsilon))=0.
	\]
	Since $\bs$ is strictly increasing,  we have $\bs(x-\varepsilon)<\bs(x-)\leq \bs(x)\leq \bs(x+)<\bs(x+\varepsilon)$.  Thus $\bs(x-),\bs(x),\bs(x+)\notin  \widehat{F}$. This leads to $x\notin \br(\widehat{F})$.

	Conversely,  suppose $x\in I\setminus \br(\widehat{F})$,
	i.e. $\bs(x-),\bs(x),\bs(x+)\notin \widehat{F}$.  Since $$\widehat{\fm}((\bs(x-),\bs(x)))=\widehat{\fm}((\bs(x),\bs(x+)))=0,$$ there exists a constant $\varepsilon>0$ such that
	\[
		\widehat{\fm}\left((\bs(x-)-\varepsilon, \bs(x+)+\varepsilon) \right)=0.
	\]
	Take $\delta>0$ such that $\bs(x-\delta)>\bs(x-)-\varepsilon$ and $\bs(x+\delta)<\bs(x+)+\varepsilon$.  We have
	\[
		\fm((x-\delta, x+\delta))\leq \widehat{\fm}\left((\bs(x-)-\varepsilon, \bs(x+)+\varepsilon) \right)=0.
	\]
	Therefore $x\notin F$.  This completes the proof.
\end{proof}

But in general we do not have $\bs(F)\subset \widehat{F}$. This is revealed in the following example.

\begin{example}\label{EXA62}
	Consider $I=[0,2]$ and  let $\fm$ be the Lebesgue measure on $[0,2]$. Define
	\begin{equation}\label{eq:64}
		\bs(x)=x,\; 0\leq x<1, \quad  \bs(1)=2,\quad \bs(x)=x+2,\; 1<x\leq 2.
	\end{equation}
	We have $F=[0,2]$,  $\bs(F)=[0,1)\cup \{2\}\cup (3,4]$ and $\widehat{F}=[0,1]\cup [3,4]$.  Particularly,  $\bs(F)$ is not a subset of $\widehat{F}$.
\end{example}

Set $\dot F:=\{x\in F: \bs(x)\in \widehat{F}\}$ and $\dot{\widehat{F}}:=\{\bs(x):x\in \dot F\}\subset \widehat{F}$.  Although $\dot{F}$ is not necessarily identical to $F$, we always have the following.

\begin{lemma}\label{LM63}
	$\fm(F\setminus \dot F)=\widehat{\fm}(\widehat{F}\setminus \dot{\widehat{F}})=0$.  Particularly $F$ (resp. $\widehat F$) is the closure of $\dot F$ (resp.  $\dot{\widehat{F}}$) in $I$ (resp. $\widehat{I}$).
\end{lemma}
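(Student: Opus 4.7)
The plan is to treat the two equalities separately, exploiting two facts that make the strictly increasing hypothesis decisive: (i) $\br(\widehat F)=F$ by Lemma~\ref{LM61}, and (ii) $\widehat\fm$ is the image of $\fm$ under the injective map $\bs$, so on point masses $\widehat\fm(\{\bs(x)\})=\fm(\{x\})$.

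For $\widehat{\fm}(\widehat F\setminus\dot{\widehat F})=0$, I would show the stronger inclusion $\widehat F\setminus\dot{\widehat F}\subset \widehat I\setminus \bs(I)$. Indeed, pick $\widehat y\in \widehat F\cap \bs(I)$ and write $\widehat y=\bs(x)$. Then $x=\br(\widehat y)\in \br(\widehat F)=F$ by Lemma~\ref{LM61}, whence $x\in \dot F$ and $\widehat y=\bs(x)\in \dot{\widehat F}$. Because $\bs^{-1}(\widehat I\setminus \bs(I))=\emptyset$, the image measure $\widehat\fm$ assigns mass zero to $\widehat I\setminus \bs(I)$, and the conclusion follows.

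For $\fm(F\setminus \dot F)=0$ the argument proceeds in two steps. First, every $x\in F\setminus\dot F$ is a discontinuity point of $\bs$: otherwise $\bs(x-)=\bs(x)=\bs(x+)$, so the only $\widehat y\in\widehat I$ with $\br(\widehat y)=x$ is $\widehat y=\bs(x)$; the identity $F=\br(\widehat F)$ then forces $\bs(x)\in\widehat F$, contradicting $x\notin \dot F$. Since $\bs$ is monotone, this shows $F\setminus\dot F$ is at most countable. Second, each such $x$ satisfies $\fm(\{x\})=0$: strict monotonicity of $\bs$ gives $\bs^{-1}(\{\bs(x)\})=\{x\}$, so $\widehat\fm(\{\bs(x)\})=\fm(\{x\})$; a positive value would put $\bs(x)$ in $\widehat F$ and place $x$ in $\dot F$, contradicting $x\in F\setminus\dot F$. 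Combining countability with vanishing atoms yields $\fm(F\setminus \dot F)=0$.

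The density assertions follow immediately: since $F=\mathrm{supp}[\fm]$, every nonempty relatively open subset $U\subset F$ has $\fm(U)>0$, so $U$ cannot be contained in the $\fm$-null set $F\setminus\dot F$, forcing $U\cap \dot F\neq\emptyset$; hence $\dot F$ is dense in $F$. The identical reasoning with $\widehat\fm$ gives $\widehat F=\overline{\dot{\widehat F}}$. There is no real obstacle in this proof; the only point that must be handled with care is the correct identification of $\br^{-1}(\{x\})$ when $\bs$ jumps at $x$, so as to conclude that $x\in F\setminus\dot F$ can only occur at countably many jump points of $\bs$.
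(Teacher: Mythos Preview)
Your proof is correct, but it takes a noticeably longer route than the paper's. The paper disposes of $\fm(F\setminus\dot F)=0$ in one line: since $\bs$ is injective and $\bs(F\setminus\dot F)\subset \widehat I\setminus\widehat F$, the push-forward identity gives $\fm(F\setminus\dot F)=\widehat\fm(\bs(F\setminus\dot F))\le \widehat\fm(\widehat I\setminus\widehat F)=0$; it then deduces $\widehat\fm(\widehat F\setminus\dot{\widehat F})=0$ by pulling back through $\bs$ and invoking the first equality. You instead handle $\widehat\fm(\widehat F\setminus\dot{\widehat F})=0$ first via the inclusion $\widehat F\setminus\dot{\widehat F}\subset\widehat I\setminus\bs(I)$ (using Lemma~\ref{LM61}), and then prove $\fm(F\setminus\dot F)=0$ by showing that $F\setminus\dot F$ consists of at most countably many jump points of $\bs$, each with $\fm$-mass zero. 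Your approach costs more effort but buys something: the countability-plus-atomlessness description of $F\setminus\dot F$ is exactly the content of the Remark following the lemma, so you have effectively proved that remark along the way. You also spell out the density argument for $\overline{\dot F}=F$, which the paper leaves implicit.
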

\begin{proof}
	We will prove the lemma by contradiction. Suppose  $\fm(F\setminus \dot F)>0$.  Then, we have  $\widehat{\fm}(\bs(F)\setminus \widehat{F})\geq \fm(F\setminus \dot F)>0$. This violates the definition of $\widehat{F}$.  Therefore,  $\fm(F\setminus \dot F)=0$.  On the other hand,
	\[
		\widehat{\fm}(\widehat{F}\setminus \dot{\widehat{F}})=\fm(\{x\in I: \bs(x)\in \widehat{F}\setminus \dot{\widehat{F}}\})\leq \fm(I\setminus \dot F)=\fm(I\setminus F)+\fm(F\setminus \dot F)=0.
	\]
	This completes the proof.
\end{proof}
\begin{remark}
	Lemma~\ref{LM61} states that $F\setminus \dot F\subset \{x\in F: \bs(x-)\neq \bs(x)\text{ or }\bs(x)\neq \bs(x+)\}$. Consequently, $F\setminus \dot F$ (as well as $\widehat{F}\setminus \dot{\widehat{F}}$) is a set containing at most countably many points.
\end{remark}

\subsection{Resolvent restricted to $\dot F$}

Recall that the reproducing kernel $g_\alpha$, where $\alpha > 0$, with respect to $(\bs,\fm)$ is defined in Definition~\ref{DEF38}.  The aim of this subsection is to demonstrate that when restricted to  $\dot F\times \dot F$, $g_\alpha$  determines the resolvent  $(\dot R_\alpha)_{\alpha>0}$ of a normal transition function on $(\dot F,\mathcal{B}(\dot F))$.  In other words,  $\dot R_\alpha$ serves as the resolvent of a normal Markov process $\dot X$ on $\dot F$.

\begin{theorem}\label{THM65}
	There exists a normal transition function $\dot P_t$ on $(\dot F,\mathcal{B}(\dot F))$ whose resolvent is
	\begin{equation}\label{eq:61}
		\dot R_\alpha(x,  A)=\int_A g_{\alpha}(x,y)\fm(dy),\quad x\in \dot F, A\in \mathcal{B}(\dot F).
	\end{equation}
\end{theorem}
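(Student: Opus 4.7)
The plan is to reduce the statement to Theorem~\ref{THM27} via the strictly increasing bijection $\bs:\dot F\to\dot{\widehat F}$ and then pull back the transition function of the quasidiffusion on $\widehat I$ with speed measure $\widehat\fm$.

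First I would observe that the reproducing kernel is transported by $\bs$: from \eqref{eq:35} one has $u(x)=\widehat u(\bs(x))$ and the analogous identity for $v$, hence
\[
	g_\alpha(x,y)=\widehat g_\alpha(\bs(x),\bs(y)),\qquad x,y\in I,
\]
where $\widehat g_\alpha$ is the reproducing kernel of Definition~\ref{DEF25} for the quasidiffusion $\widehat X$ on $\widehat I$. Because $\bs^*:I^*\to \widehat I$ is a homeomorphism and $\bs=\bs^*|_I$, the restriction $\bs:\dot F\to\dot{\widehat F}$ is a Borel isomorphism; combined with $\fm(F\setminus\dot F)=\widehat\fm(\widehat F\setminus\dot{\widehat F})=0$ from Lemma~\ref{LM63}, this shows that the pushforward of $\fm|_{\dot F}$ under $\bs$ equals $\widehat\fm|_{\dot{\widehat F}}$. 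A change of variables then gives $\dot R_\alpha(x,A)=\widehat R_\alpha(\bs(x),\bs(A))$ for every $x\in\dot F$ and $A\in\mathcal{B}(\dot F)$, where $\widehat R_\alpha$ is the resolvent from Theorem~\ref{THM27} applied to $\widehat X$.

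Theorem~\ref{THM27} furnishes a normal Feller transition function $\widehat P_t$ whose resolvent is $\widehat R_\alpha$, and the core step is to verify the zero-charge identity
\[
	\widehat P_t(\widehat y,\widehat F\setminus\dot{\widehat F})=0,\qquad \widehat y\in\dot{\widehat F},\ t>0.
\]
By Lemma~\ref{LM63} and the remark following it, $\widehat F\setminus\dot{\widehat F}$ is an at most countable $\widehat\fm$-null set, so each $\widehat p\in\widehat F\setminus\dot{\widehat F}$ is a non-atom of $\widehat\fm$. The identity then follows from the fact that for the $\widehat\fm$-symmetric quasidiffusion $\widehat X$, the transition function admits a density with respect to $\widehat\fm$ off its sticky set; summing over the countable exceptional set yields the claim. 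Equivalently, one may invoke the Brownian time-change representation $\widehat X_t=W_{\tau_t}$ and the fact that $\mathbf{P}(W_{\tau_t}=\widehat p)=0$ whenever $\widehat\fm(\{\widehat p\})=0$.

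Finally I would define $\dot P_t(x,A):=\widehat P_t(\bs(x),\bs(A))$ for $t>0$, $x\in\dot F$, $A\in\mathcal{B}(\dot F)$, and $\dot P_0(x,\cdot):=\delta_x$. Joint measurability is inherited from $\widehat P_t$ via the Borel isomorphism $\bs$; the zero-charge identity together with normality of $\widehat P_t$ yields $\dot P_t(x,\dot F)=1$; Chapman--Kolmogorov is inherited from $\widehat P_t$ since $\widehat P_s(\bs(x),\cdot)$ is concentrated on $\dot{\widehat F}$; and the resolvent identity
\[
	\int_0^\infty e^{-\alpha t}\dot P_t(x,A)\,dt=\widehat R_\alpha(\bs(x),\bs(A))=\dot R_\alpha(x,A)
\]
closes the argument. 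The main obstacle is the zero-charge identity: although intuitively clear (non-sticky exceptional points are traversed instantaneously), its rigorous proof must use the symmetric (or time-change) structure of $\widehat X$, and this is the only place where the previously developed theory of quasidiffusions enters essentially.
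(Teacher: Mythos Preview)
Your approach is essentially the same as the paper's: define $\dot P_t(x,A):=\widehat P_t(\bs(x),\bs(A))$ via the quasidiffusion $\widehat X$ on $\widehat F$, use Lemma~\ref{LM63} to push measures back and forth, and verify normality, Chapman--Kolmogorov, and the resolvent identity. The one simplification the paper makes is that your ``main obstacle'' (the zero-charge identity) is dispatched in a single line by citing that $\widehat P_t(\widehat x,\cdot)$ admits a density with respect to $\widehat\fm$ for \emph{every} $\widehat x\in\widehat F$ (K\"uchler~\cite{K75}), so no sticky-set caveat or time-change argument is needed.
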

\begin{proof}
	Denote by $\widehat{P}_t$ the transition function of the quasidiffusion $\widehat{X}$ on $\widehat{F}$ with speed measure $\widehat{\fm}$.  Note that $\widehat{P}_t(\widehat{x},\cdot)$ is absolutely continuous with respect to $\widehat{\fm}$ for any $\widehat{x}\in \widehat{F}$; see, e.g., \cite{K75}.  Denote its transition density by $\widehat{p}_t(\widehat{x},\cdot)$. Let
	\[
		\dot P_t(x,A):=\widehat{P}_t(\bs(x),\bs(A)),\quad t\geq 0,  x\in \dot F,  A\in \mathcal{B}(\dot F).
	\]
	We have
	\[
		\widehat{P}_t(\bs(x),\bs(A))=\int_{\bs(A)}\widehat{p}_t(\bs(x),\widehat y)\widehat{\fm}(d\widehat{y})=\int_A \widehat{p}_t(\bs(x),\bs(y))\fm(dy).
	\]
	Hence $\dot P_t(x,\cdot)$ is absolutely continuous with respect to $\fm$, and its density function is $\widehat{p}_t(\bs(x),\bs(\cdot))$.
	Clearly $\dot P_t(\cdot,  A)$ is $\mathcal{B}(\dot F)$-measurable,
	\[\dot P_0(x,A)=\widehat{P}_0(\bs(x),\bs(A))=1_{\bs(A)}(\bs(x))=1_A(x),
	\]
	and on account of Lemma~\ref{LM63},  $\dot P_t(x,\cdot)$ is a probability measure on $\dot F$. Additionally,  using Lemma~\ref{LM63} again, we get that for $t,s\geq 0$,
	\[
		\begin{aligned}
			\int_{\dot F}\dot P_t(x,dy)\dot P_s(y,A) & =\int_{\dot F}\widehat{p}_t(\bs(x),\bs(y))\widehat{P}_s(\bs(y),\bs(A))\fm(dy)                                     \\
			                                         & =\int_{\dot{\widehat{F}}}\widehat{p}_t(\bs(x),\widehat y)\widehat{P}_s(\widehat y,\bs(A))\widehat\fm(d\widehat y) \\
			                                         & =\int_{\widehat{F}}\widehat{P}_t(\bs(x), d\widehat{y})\widehat{P}_s(\widehat{y}, \bs(A))                          \\
			                                         & =\widehat{P}_{t+s}(\bs(x),\bs(A))=\dot P_{t+s}(x,A).
		\end{aligned}\]
	Hence the Chapman-Kolmogorov equation is verified.  As a result,  $\dot P_t$ is a normal transition function on $(\dot F,\mathcal B(\dot F))$.

	Let $\widehat{g}_\alpha$ be the reproducing kernel of $\widehat{X}$,  i.e.,  its resolvent density with respect to $\widehat{\fm}$. To obtain \eqref{eq:61}, it suffices to note that $g_\alpha(x,y)=\widehat{g}_\alpha(\bs(x),\bs(y))$ for $x,y\in \dot F$ .  This completes the proof.
\end{proof}

In general, we cannot state that $\dot X$ possesses certain pathwise properties on $\dot F$.  It is even not a \emph{right process} if $\dot{\widehat F}\neq \widehat F$,  as we will show in Corollary~\ref{COR69}.

\subsection{Resolvent extension}
Let $\dot R_\alpha$ be the resolvent obtained in Theorem~\ref{THM65}.
Our goal is to identify extensions of $\dot R_\alpha$ that correspond to certain \emph{nice} Markov processes.  The following terminology is commonly used in this context.

\begin{definition}
	Let $(F_i,\mathcal{B}(F_i))$,  $i=1,2$,  be two topological spaces with Borel $\sigma$-algebras such that $F_1\subset F_2$ and $\mathcal{B}(F_1)=\mathcal{B}(F_2)|_{F_1}:=\{A\cap F_1: A\in \mathcal{B}(F_2))$.  Furthermore, let $(R^1_\alpha)_{\alpha>0}$ be the resolvent of a Markov process on $F_1$.  Then the resolvent $(R^2_\alpha)_{\alpha>0}$ of another Markov process on $F_2$ is called an \emph{extension} of $(R^1_\alpha)_{\alpha>0}$ if
	\begin{itemize}
		\item[(1)] $R^2_\alpha(1_{F_2\setminus F_1})=0$.
		\item[(2)] $(R^2_\alpha f)|_{F_1}=R^1_\alpha (f|_{F_1})$ on $F_1$ for any bounded $f\in \mathcal{B}(F_2)$.
	\end{itemize}
\end{definition}

We start by examining the resolvent $R^*_\alpha$ of $X^*$ on $F^*$, which is the image process of $\widehat{X}$ under the homeomorphism $\bs^*$. Let $E^*$ be equal to $F^*$ when $r$ is not entrance, and $E^*:=F^*\cup \{r\}$ when $r$ is entrance. Then, the transition function of $X^*$ acts on $C_\infty(E^*)$ as a Feller semigroup.  Based on Lemma~\ref{LM63}, we can conclude that $R^*_\alpha$ is an extension of $\dot R_\alpha$ Additionally, its resolvent density $g^*_\alpha$ with respect to $\fm^*$ is the continuous extension of $g_\alpha|_{\dot F\times \dot F}$ to $F^*\times F^*$.

There are also interesting extensions of $\dot R_\alpha$ on $F$. Consider a modified function $\tilde{\bs}:F\rightarrow \widehat{F}$ of $\bs$ as follows. For $x\in \dot F$, we set $\tilde{\bs}(x)=\bs(x)$. For $x\in F\setminus \dot F$ with $\bs(x-) \in \widehat{F}$ (or $\bs(x+)\in \widehat{F}$), we define $\tilde{\bs}(x)$ as $\bs(x-)$ (or $\bs(x+)$). It is important to note that if both $\bs(x-)\in \widehat{F}$ and $\bs(x+)\in \widehat{F}$, we may take either $\tilde{\bs}(x)=\bs(x-)$ or $\tilde{\bs}(x+)=\bs(x+)$. In other words, there may be different candidates for $\tilde{\bs}$.  Note that $\br(\tilde{\bs}(x))=x$ for any $x\in F$. Furthermore, let
\begin{equation}\label{eq:62}	R_\alpha(x,A):=\widehat{R}_\alpha(\tilde{\bs}(x),\tilde{\bs}(A)),\quad x\in F, A\in \mathcal{B}(F).
\end{equation}
Obviously
\[
	R_\alpha(x,A)=\int_A \tilde{g}_\alpha(x,y)\fm(dy),
\]
where $\tilde{g}_\alpha (x,y)=\widehat{g}_\alpha (\tilde{\bs}(x),\tilde{\bs}(y))$.  However, it should be noted that while  $\tilde{g}_\alpha=g_\alpha$ on $\dot F\times \dot F$, $\tilde{g}_\alpha(x,y)\neq g_\alpha(x,y)$ when  either $x\in F\setminus \dot F$ or $y\in F\setminus \dot F$.

\begin{theorem}\label{THM67}
	Let  $(R_\alpha)_{\alpha>0}$ be the family of kernels on $F$ defined as \eqref{eq:62}. Then it is an extension of $(\dot R_\alpha)_{\alpha>0}$,  and there exists a normal c\`adl\`ag $\fm$-symmetric Markov process $X=(X_t)_{t\geq 0}$ on $F$ that satisfies the skip-free property (see Remark~\ref{RM42}) and the quasi-left-continuity, whose resolvent is $(R_\alpha)_{\alpha>0}$.  Furthermore,
	\begin{itemize}
		\item[(1)]  $X^*$ is the canonical Ray-Knight compactification of $X$.
		\item[(2)]  $X$ satisfies the strong Markov property if and only if $\tilde\bs$ is continuous (equivalently, $\tilde{\bs}(F)=\widehat{F}$). In this case, $F=F^*$ and $X$ is identified with $X^*$.
	\end{itemize}
\end{theorem}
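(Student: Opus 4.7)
My plan is to realise $X$ as a pullback of the quasidiffusion $\widehat X$ on $\widehat F$, following the template of \eqref{eq:53} in \S\ref{SEC5} but with $\tilde{\bs}$ in place of $\bs$. Concretely, let $\widehat X$ with law $\widehat{\mathbf P}_{\widehat x}$ be the quasidiffusion on $\widehat F$ with speed measure $\widehat\fm$, and set $\mathbf P_x := \widehat{\mathbf P}_{\tilde{\bs}(x)}$ together with $X_t := \br(\widehat X_t)$. Since $\br(\tilde{\bs}(x)) = x$, the process is normal. For any $f \in \mathcal B_b^+(F)$ and $x \in F$, combining the resolvent formula from Theorem~\ref{THM27} with $\widehat\fm(\widehat F \setminus \dot{\widehat F}) = 0$ (Lemma~\ref{LM63}) and the change of variables $\widehat y = \bs(y)$ on $\dot F$ yields
\[
\mathbf E_x \int_0^\infty e^{-\alpha t} f(X_t)\,dt = \int_{\dot F} \widehat g_\alpha(\tilde{\bs}(x), \tilde{\bs}(y))\,f(y)\,\fm(dy) = \int_F \tilde g_\alpha(x,y) f(y)\,\fm(dy) = R_\alpha f(x),
\]
the penultimate equality using $\fm(F\setminus \dot F) = 0$. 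The extension property of $R_\alpha$ over $\dot R_\alpha$ then drops out: $R_\alpha(x, F\setminus \dot F) = 0$ from $\fm(F\setminus \dot F) = 0$, and $R_\alpha f|_{\dot F} = \dot R_\alpha (f|_{\dot F})$ from Theorem~\ref{THM65} together with $\tilde g_\alpha = g_\alpha$ on $\dot F\times \dot F$.

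\textbf{Pathwise properties and Ray--Knight compactification.} Since $\widehat X$ is a c\`adl\`ag standard process on $\widehat F$ and the map $\br$ is continuous on the subspace topology of $\widehat I$ (intervals of the form $(\bs(x-),\bs(x))$ or $(\bs(x),\bs(x+))$ are simply absent from $\widehat I$), the process $X$ is c\`adl\`ag. The $\fm$-symmetry of $X$ follows from $\tilde g_\alpha(x,y) = \widehat g_\alpha(\tilde{\bs}(x), \tilde{\bs}(y))$ and the symmetry of $\widehat g_\alpha$. Skip-freeness transfers directly: if $X$ jumped across some $c \in F$, then by the monotonicity of $\br$ any lift $\widehat c \in \br^{-1}(c)\cap \widehat F$ would lie strictly between $\widehat X_{t-}$ and $\widehat X_t$, contradicting the skip-free property of $\widehat X$ on $\widehat F$. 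Quasi-left-continuity is inherited from $\widehat X$ through the continuity of $\br$. For assertion (1), the Feller resolvent $R^*_\alpha$ on $F^*$ from \eqref{eq:52} extends $R_\alpha$ to the compactification $F^*$ of $F$ obtained by splitting the discontinuities of $\bs$; invoking the canonical Ray--Knight construction recalled in \cite[\S4.3]{L23} then identifies $X^*$ with the Ray--Knight compactification of $X$.

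\textbf{Strong Markov property.} For the forward direction of (2), if $\tilde{\bs}$ is continuous, then $\tilde{\bs}(F)$ is closed in $\widehat F$ and contains the dense subset $\dot{\widehat F}$ (Lemma~\ref{LM63}), forcing $\tilde{\bs}(F) = \widehat F$; the injective continuous map $\tilde{\bs}:F\to \widehat F$ is then a homeomorphism whose inverse is $\br|_{\widehat F}$, which in turn identifies $F$ with $F^*$ and $X$ with the Feller (hence strong Markov) process $X^*$. The converse direction is where I expect the main obstacle. At a discontinuity $x_0$ of $\tilde{\bs}$, I would select a sequence $x_n \to x_0$ in $F$ with $\widehat x_n := \tilde{\bs}(x_n)$ converging in $\widehat F$ to some $\widehat x_\infty \neq \tilde{\bs}(x_0)$. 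Under $\mathbf P_{x_n}$ the driver $\widehat X$ starts at $\widehat x_n$, whereas under $\mathbf P_{x_0}$ it starts at $\tilde{\bs}(x_0)$; both processes quickly hit $x_0$ after projection by $\br$, but a strong Markov restart at the hitting time of a small neighbourhood of $x_0$ would force the post-hitting laws to coincide, contradicting the information still retained by $\widehat X$. Making this comparison rigorous---presumably via Laplace transforms of exit times from small $\br$-neighbourhoods of $x_0$, or via a discontinuity of the Feller-type resolvent $R_\alpha f$ at $x_0$---is the delicate technical step, entirely analogous to the non-strong-Markov phenomenon exhibited already in Theorem~\ref{THM51}.
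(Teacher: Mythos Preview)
Your construction of $X$, the resolvent computation, and the verification of the c\`adl\`ag, skip-free, quasi-left-continuous, and $\fm$-symmetric properties are essentially the same as the paper's, as is the appeal to \cite[\S4.3]{L23} for assertion~(1). Two points deserve comment.

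\emph{Forward direction of (2).} Your claim that $\tilde{\bs}(F)$ is closed in $\widehat F$ is not automatic when $r\notin I$, since $F$ need not be compact. The paper handles this by extending $\tilde{\bs}$ continuously to the compact set $\bar F:=F\cup\{r\}$; then $\tilde{\bs}(\bar F)$ is compact, contains the dense set $\dot{\widehat F}$, and hence equals $\widehat F\cup\{\widehat r\}$. This is a minor patch to your argument.

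\emph{Converse direction of (2).} Here the paper takes a genuinely different and much shorter route than the hitting-time comparison you sketch. Assuming $X$ is strong Markov, it is in fact a Hunt process, and by assertion~(1) its canonical Ray--Knight compactification is $X^*$ on $F^*$. A general fact about Ray--Knight compactifications of right processes (\cite[(11.2)]{Ge75}) then gives that $F^*\setminus F$ is $\fm^*$-polar for $X^*$. But $X^*$ is homeomorphic to the quasidiffusion $\widehat X$, for which every singleton of $\widehat F$ is non-polar; hence $F^*\setminus F=\emptyset$, i.e.\ $\tilde{\bs}(F)=\widehat F$. This polarity argument bypasses entirely the delicate pathwise analysis you anticipate, and it is what makes the converse clean. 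Your proposed approach via exit-time Laplace transforms or resolvent discontinuity is plausible in spirit but, as you note, incomplete; the paper's method is both simpler and fully rigorous once assertion~(1) is in hand.
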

\begin{proof}
	Write the quasidiffusion on $\widehat{F}$ with speed measure $\widehat{\fm}$ and lifetime $\widehat \zeta$ as
	\[
		\widehat X=\left\{\widehat{\Omega},\widehat \sF, \widehat{\sF}_t, \widehat{X}_t, (\widehat{\mathbf{P}}_{\widehat{x}})_{\widehat{x}\in \widehat{F}}\right\}.
	\]
	We define $ \Omega:=\left\{\omega\in \widehat{\Omega}: \widehat{X}_0(\omega)\in \tilde\bs(F)\right\}\in \widehat\sF_0, \sF:=\widehat{\sF}\cap \Omega=\{A\cap  \Omega: A\in \widehat{\sF}\}$ and
	\begin{equation}
		\begin{aligned}
			 & \sF_t:=\widehat{\sF}_t\cap \Omega=\{A\cap  \Omega: A\in \widehat{\sF}_t\},\quad  X_t(\omega):=\br(\widehat{X}_t(\omega)), \; \omega\in \Omega, \\
			 & {\mathbf{P}}_x:=\widehat{\mathbf{P}}_{\tilde\bs(x)}|_{ \Omega},\; x\in F \quad  \zeta(\omega):=\widehat{\zeta}(\omega),\; \omega\in \Omega.
		\end{aligned}
	\end{equation}
	Mimicking the arguments in Theorem~\ref{THM51},  one can deduce that $$X:=\{\Omega, \sF, \sF_t,  X_t,  \mathbf{P}_x\}$$ is a normal $\fm$-symmetric Markov process on $F$ with lifetime $\zeta$ and resolvent $R_\alpha$.

	Note that $\br: \widehat{F}\rightarrow F$ is continuous.  Hence $X_t=\br(\widehat{X}_t)$ is c\`adl\`ag because so is $\widehat{X}_t$.  To verify the skip-free property of $X$,  suppose $\widehat{x}:=\widehat{X}_{t-}\in \{\bs(x),\bs(x+),\bs(x-)\}$ and $\widehat{y}:=\widehat{X}_t\in \{\bs(y),\bs(y+),\bs(y-)\}$ for some $x,y\in F$ with $x< y$.  Since $\widehat{X}$ satisfies the skip-free property,  it follows that $(\widehat{x},\widehat{y})\cap \widehat{F}=\emptyset$,  i.e.  $\widehat{\fm}\left((\widehat{x},\widehat{y})\right)=0$.  Using the definition of $\widehat{\fm}$,  one can get $\fm((x,y))=0$.  This implies $(X_{t-},X_t)=(\br(\widehat{x}),\br(\widehat{y}))=(x,y)\subset F^c$.  Particularly,  $X$ satisfies the skip-free property.
	Let us turn to prove the quasi-left-continuity of $X$. Take a sequence of $\sF_t$-stopping times $\sigma_n$ increasing to $\sigma$.  Since $\sF_t\subset \widehat{\sF}_t$,  it follows that $\sigma_n, \sigma$ are also $\widehat{\sF}_t$-stopping times.  On account of the quasi-left-continuity of $\widehat{X}$ and the continuity of $\br:\widehat{F}\rightarrow F$,  we have
	\[
		\lim_{n\rightarrow \infty} X_{\sigma_n}=\lim_{n\rightarrow \infty} \br(\widehat{X}_{\sigma_n})=\br(\widehat{X}_\sigma)=X_{\sigma}.
	\]
	This establishes the quasi-left-continuity of $X$.

	The analogous arguments to the proof of \cite[Theorem~4.10]{L23} yield that $X^*$ is the canonical Ray-Knight compactification of $X$.

	To prove the second assertion for $X$, we first note that the continuity of $\tilde{\bs}$ is equivalent to $\tilde{\bs}(F)=\widehat{F}$,  which also implies that $\tilde{\bs}$ is a homeomorphism.  To demonstrate this,  assume that $\tilde{\bs}:F\rightarrow \widehat{F}$ is continuous.  Denote the continuous extension of $\tilde{\bs}$ to $\bar F:= F\cup \{r\}$  still by $\tilde{\bs}$.  Since $\bar{F}$ is compact in $\overline{\mathbb R}$ and $\tilde{\bs}:\bar{F}\rightarrow \widehat{F}\cup \{\widehat{r}\}$ is continuous and injective, it follows that $\tilde{\bs}(\bar{F})\subset \widehat{F}\cup\{\widehat{r}\}$ is compact. By  Lemma~\ref{LM63}, we have $\widehat{\fm}(\widehat{F}\setminus \tilde{\bs}(\bar F))\leq \widehat{\fm}(\widehat{F}\setminus {\bs}(\dot F))=0$.  Therefore, the definition of $\widehat{F}$ yields $\tilde{\bs}(\bar{F})=\widehat{F}\cup \{\widehat{r}\}$.  In particular,  $\tilde{\bs}:F\rightarrow \widehat{F}$ is a homeomorphism. Conversely,  suppose $\tilde{\bs}(F)=\widehat{F}$.  Argue by contraction and suppose further that $x_n\rightarrow x$ in $F$ while $\tilde{\bs}(x_n)\rightarrow \tilde{\bs}(y)$ in $\widehat{F}$ with $x\neq y$.  Taking a subsequence if necessary,  we may and do assume without loss of generality that $x_n$ is increasing.  Then $\tilde{\bs}(y)=\lim_{n\rightarrow \infty}\tilde{\bs}(x_n)\leq \tilde{\bs}(x)$. Since $\tilde{\bs}$ is strictly increasing and $x\neq y$, we must have $y<x$ . As $x_n\uparrow x>y$ , there exists an integer $N$ such that $x_n\geq x_N>y$ for all $n\geq N$. It follows that $\tilde\bs(x_n)\geq \tilde\bs(x_N)>\bs(y)$.
	This violates the assumption $\tilde{\bs}(x_n)\rightarrow \bs(y)$.

	When $\tilde{\bs}$ is continuous, the previous argument shows that  $\tilde{\bs}$ is a homeomorphism. In particular,  $F=F^*$ and $X$ is identified with $X^*$, satisfying the strong Markov property.

	Conversely, suppose that $X$ satisfies the strong Markov property.  Then $X$ is actually a Hunt process on $F$, and by the first assertion, the Feller process $X^*$ is the canonical Ray-Knight compactification of $X$.  According to \cite[(11.2)]{Ge75},  $F^*\setminus F$ is $\fm^*$-polar with respect to $X^*$.  Since $X^*$ is the homeomorphic image of $\widehat{X}$ and every singleton of $\widehat{F}$ is not $\widehat{\fm}$-polar with respect to $\widehat{X}$,  we deduce that $F^*\setminus F=\emptyset$.  Therefore,  $\tilde\bs(F)=\widehat{F}$.  This completes the proof.
\end{proof}
\begin{remark}
	It is important to emphasize that the continuity of $\tilde{\bs}$ does not necessarily imply $\dot F=F$.   For example, consider $I=[0,2]$,  $\bs$ defined as in \eqref{eq:64}, and $\fm(dx):=1_{[0,1]}(x)dx$.  Then $F=\widehat{F}=[0,1]$,  $\tilde{\bs}$ is a homeomorphism between $F$ and $\widehat{F}$,  while $\dot F=[0,1)\neq F$.
\end{remark}

This theorem readily leads to the following.

\begin{corollary}\label{COR69}
	If $\dot{\widehat{F}}\neq \widehat{F}$, then there is no right process on $\dot F$ whose resolvent is $\dot R_\alpha$.
\end{corollary}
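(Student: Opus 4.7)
The plan is to argue by contradiction, mimicking the endgame of the proof of Theorem~\ref{THM67}(2). Suppose, for the sake of contradiction, that there exists a right process $Y$ on $\dot F$ whose resolvent is $(\dot R_\alpha)_{\alpha>0}$. I would first form the canonical Ray--Knight compactification $Y^{\sharp}$ of $Y$; this produces a Ray (in fact Feller) process on a compactification $\dot F^{\sharp}\supset \dot F$ in which $\dot F$ is densely embedded, whose resolvent agrees with $\dot R_\alpha$ on $\dot F$.

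Next I would identify $Y^{\sharp}$ with the Feller process $X^{*}$ on $F^{*}$ constructed in the proof of Theorem~\ref{THM67} as the $\br^{*}$-image of the quasidiffusion $\widehat X$ on $\widehat F$. Two observations enable this identification. First, via the homeomorphism $\br^{*}\colon \widehat F\to F^{*}$, the subset $\dot F=\br^{*}(\dot{\widehat F})$ is topologically dense in $F^{*}=\br^{*}(\widehat F)$, since $\dot{\widehat F}$ is dense in $\widehat F$ by Lemma~\ref{LM63}. Second, the Feller resolvent $R^{*}_\alpha$ of $X^{*}$ coincides with $\dot R_\alpha$ on $\dot F$, because $\fm^{*}$ assigns no mass to $F^{*}\setminus \dot F$ (again by Lemma~\ref{LM63}) and $g_\alpha^{*}\equiv g_\alpha$ on $\dot F\times\dot F$. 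Uniqueness of the canonical Ray--Knight compactification (it is determined by the resolvent alone) then forces $Y^{\sharp}=X^{*}$ on $\dot F^{\sharp}=F^{*}$.

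After this identification, I would invoke \cite[(11.2)]{Ge75}, exactly as in the proof of Theorem~\ref{THM67}(2), to conclude that $F^{*}\setminus\dot F$ must be $\fm^{*}$-polar for $X^{*}$. Pick any $\widehat z\in\widehat F\setminus\dot{\widehat F}$ (nonempty by assumption) and set $z^{*}:=\br^{*}(\widehat z)\in F^{*}\setminus\dot F$. Then $\{z^{*}\}$ would be a polar singleton for $X^{*}$; under the homeomorphism $\br^{*}$, this translates into $\{\widehat z\}$ being $\widehat\fm$-polar for $\widehat X$. This contradicts the already-used fact that no singleton in $\widehat F$ is polar for the quasidiffusion $\widehat X$.

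The main obstacle is the second step: carrying out the identification $Y^{\sharp}\cong X^{*}$ rigorously, i.e.\ invoking uniqueness of the Ray--Knight compactification and checking that both candidates share the same resolvent on the dense subset $\dot F$. Once the Ray--Knight compactification of $Y$ is pinned down as $X^{*}$, the polarity argument proceeds verbatim as at the end of the proof of Theorem~\ref{THM67}(2); the hypothesis $\dot{\widehat F}\neq\widehat F$ enters only to produce the offending polar singleton.
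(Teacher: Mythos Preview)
Your proposal is correct and follows essentially the same route as the paper's proof: argue by contradiction, identify the Ray--Knight compactification of the putative right process with $X^{*}$, apply \cite[(11.2)]{Ge75} to obtain that $F^{*}\setminus\dot F$ is $\fm^{*}$-polar, and contradict the non-polarity of singletons for the quasidiffusion $\widehat X$. The only cosmetic difference is that the paper justifies the identification step by embedding $\dot F$ into the compact metric space $F\cup\{r\}$ and invoking \cite[Theorem~4.10]{L23}, whereas you appeal directly to uniqueness of the canonical Ray--Knight compactification together with the density of $\dot F$ in $F^{*}$ from Lemma~\ref{LM63}; both lead to the same conclusion.
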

\begin{proof}
	Let $\dot X$ be a right process on $\dot F$.  By embedding $\dot F$ into the compact metric space $F\cup \{r\}$ and repeating the arguments in the proof of \cite[Theorem~4.10]{L23},   the Ray-Knight compactification of $\dot X$ in  the sense of \cite[\S10]{Ge75} is actually $X^*$. In particular,  $F^*\setminus \dot F$ is $\fm^*$-polar with respect to $X^*$ and must therefore be empty. This leads to a contradiction with $\dot{\widehat{F}}\neq \widehat{F}$.  Thus, we have completed the proof.
\end{proof}
\begin{remark}
	Actually, we can choose any $F'\in \mathcal{B}(\dot F)$ such that $\fm(\dot F\setminus F')=0$.  By doing so, we can obtain a resolvent on $F'$ by restricting reproducing kernel $g_\alpha|_{F'\times F'}$. In other words, for $x'\in F'$ and $A'\in \mathcal{B}(F')$, we define $R'_\alpha(x', A'):=\int_{A'}g_\alpha(x',y')\fm(dy')$.  This resolvent corresponds to a Markov process,  which is not a right process on $F'$.   A similar investigation has also been documented in \cite{BCR22}.
\end{remark}

\section{Non-decreasing scale function}\label{SEC7}

We close this paper with some remarks regarding the reproducing kernel for the general pair $(\bs,\fm)$ in Definition~\ref{DEF31}.  These remarks are particularly applicable when $\bs$ is constant over certain intervals.

Let $(a_n,b_n)$, $1\leq n\leq N$ with $N\in \bN\cup\{\infty\}$, be the sequence of at most countably many open sub-intervals of $I$ over which $\bs$ is constant. We assume that the values $C_n$ of $\bs$ on each $(a_n,b_n)$ are distinct, so that every $(a_n,b_n)$ is the largest open interval containing some $x\in (a_n,b_n)$ such that $\bs$ is constant on it.
The reproducing kernel $g_\alpha$ remains the same  as in Definition~\ref{DEF38}. However, for any $f\in \mathcal{B}^+_b(I)$, $$R_\alpha f(\cdot):=\int_I g_\alpha(\cdot, y)f(y)\fm(dy)$$ is constant on each $(a_n,b_n)$. (In fact, $\bs(x)=\bs(y)$ always implies $R_\alpha f(x)=R_\alpha f(y)$.)
To transform $R_\alpha$ (or their certain restrictions) into the resolvent of some Markov process, the approach is to redefine the structure of $I$ and, more precisely, to identify each
\[
	\{x\in I: \bs(x)=C_n\}\quad (\supset (a_n,b_n))
\]
as an abstract single point. Denote these abstract points by $x^\#_n$, $1\leq n\leq N$. We define
\[
	I^\#:=\left(I\setminus \cup_{1\leq n\leq N} \{x\in I: \bs(x)=C_n\}\right)\cup\{x^\#_n: 1\leq n\leq N\}.
\]
As a result, $\bs,  \fm$ and $g_\alpha$ can induce corresponding $\bs^\#, \fm^\#$ on $I^\#$ and $g^\#_\alpha$ on $I^\#\times I^\#$.  We still have $\widehat{\fm}=\fm^\#\circ \bs^{\# -1}$.  Additionally,  $\br$ induces a map $\br^\#: \widehat{I}\rightarrow I^\#$.
Denote still by $\widehat{F}$ the topological support of $\widehat{\fm}$ on $\widehat{I}$. Let
\[
	F^\#:=\br^\#(\widehat{F}),\quad {\dot F}^\#:=\{x^\#\in I^\#: \bs^\#(x^\#)\in \widehat{F}\}\subset F^\#.
\]
The following result then holds.

\begin{lemma}
	It holds that $\fm^\#(I^\#\setminus \dot F^\#)=0$.
\end{lemma}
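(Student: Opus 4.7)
The plan is to observe that after the identification procedure, the induced scale function $\bs^\#$ becomes strictly increasing on $I^\#$, so the assertion reduces to a direct pushforward computation analogous to the first equality in Lemma~6.3. First I would verify the set-theoretic identity
\[
I^\# \setminus \dot F^\# = \bs^{\#-1}(\widehat{I} \setminus \widehat{F}),
\]
which is immediate from the very definition $\dot F^\# = \{x^\# \in I^\#: \bs^\#(x^\#) \in \widehat{F}\}$ together with the fact that $\bs^\#$ maps $I^\#$ into $\widehat{I}$ by construction.

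Next I would invoke the pushforward relation $\widehat{\fm} = \fm^\# \circ \bs^{\# -1}$ recorded just before the statement of the lemma: since $\widehat{I} \setminus \widehat{F}$ is a Borel subset of $\widehat{I}$ (indeed, an open one, as $\widehat{F}$ is closed in $\widehat{I}$), this gives
\[
\fm^\#(I^\# \setminus \dot F^\#) = \fm^\#\bigl(\bs^{\#-1}(\widehat{I} \setminus \widehat{F})\bigr) = \widehat{\fm}(\widehat{I} \setminus \widehat{F}).
\]

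Finally, since $\widehat{F}$ is by definition the topological support of $\widehat{\fm}$, its complement $\widehat{I} \setminus \widehat{F}$ is an open set of $\widehat{\fm}$-measure zero, and we conclude $\fm^\#(I^\# \setminus \dot F^\#) = 0$. There is no serious obstacle here: the only point requiring attention is confirming that $\bs^\#$ is genuinely well-defined and (Borel) measurable on the quotient $I^\#$, which follows at once from the maximality assumption on the intervals $(a_n,b_n)$ that forces the values $C_n$ to be distinct and makes $\bs^\#$ strictly increasing.
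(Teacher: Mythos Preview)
Your argument is correct and essentially identical to the paper's: both observe that $I^\#\setminus\dot F^\#\subset(\bs^\#)^{-1}(\widehat I\setminus\widehat F)$ (you in fact note equality, which also holds) and then apply the pushforward identity $\widehat\fm=\fm^\#\circ\bs^{\#-1}$ together with $\widehat\fm(\widehat I\setminus\widehat F)=0$. The paper's proof is the same computation written as an inequality rather than an equality.
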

\begin{proof}
	It suffices to note that for $x^\#\notin \dot F^\#$,  $x^\#\in (\bs^\#)^{-1}(\widehat{I}\setminus \widehat{F})$. Therefore, $\fm^\#(I^\#\setminus \dot F^\#)\leq \fm^\#\left((\bs^\#)^{-1}(\widehat{I}\setminus \widehat{F}) \right)=\widehat{\fm}(\widehat{I}\setminus \widehat{F})=0$.
\end{proof}

Mimicking the arguments in Theorem~\ref{THM65},  one can conclude that
\[
	\dot R^\#_\alpha(x^\#, A^\#):=\int_{A^\#} g^\#_\alpha(x^\#,y^\#)\fm^\#(dy^\#),\quad x^\#\in \dot F^\#, A^\#\in \mathcal{B}(\dot F^\#)
\]
serves as the resolvent of a Markov process on $\dot F^\#$. Note that we may still make a certain completion $I^{\# *}$ of $I^\#$ such that $I^{\# *}$ is homeomorphic to $\widehat{I}$; see,  e.g.,  \cite[\S4.1]{L23}.  Denote by $\widehat{X}$ the quasidiffusion on $\widehat{F}$ with speed measure $\widehat{\fm}$. Under this homeomorphism, the image process $X^{\# *}$ on $I^{\# *}$ of $\widehat{X}$ is a Feller process, and the resolvent of $X^{\# *}$ is an extension of $\dot R^\#_\alpha$.
Furthermore,  $\dot R^\#_\alpha$ admits another resolvent extension $R_\alpha^\#$ on $F^\#$,
whose resolvent density with respect to $\fm^\#$ is an analogous modification of $g^\#_\alpha$ to that in Theorem~\ref{THM67}.

In general, we cannot obtain the pathwise properties of the Markov process with resolvent $\dot R^\#_\alpha$ or $R^\#_\alpha$ because the topology on $I^\#$ is unclear.  However, in certain special cases, the investigation can be similar to that  conducted in Theorems~\ref{THM65} and \ref{THM67}. For instance, if the conditions
\begin{equation}\label{eq:71}
	\bs(a_n)=\bs(a_n+),\quad \bs(b_n-)=\bs(b_n)
\end{equation}
hold for all $1\leq n\leq N$, then
each $x^\#_n$ can be identified with the closed interval $[a_n,b_n]$. Consequently,  $I^\#$ can be treated as an (artificial) interval and $\bs^\#$ as a strictly increasing scale function on $I^\#$. Eventually, everything can be categorized into the cases discussed in the previous two sections. 


\bibliographystyle{gbt7714-numerical}  
\bibliography{ResApp} 

\end{document}